\numberwithin{equation}{section}
\theoremstyle{plain}
\newtheorem{thm}{Theorem}[section]
\newtheorem{proposition}[thm]{Proposition}
\newtheorem{cor}[thm]{Corollary}
\newtheorem{lemma}[thm]{Lemma}
\newtheoremstyle{noparens}%
 {}{}%
 {\itshape}{}%
 {\bfseries}{.}%
 { }%
 {\thmname{#1}\thmnumber{ #2}\mdseries\thmnote{ #3}}
\theoremstyle{noparens}
\newtheorem{lemmaNoParens}[thm]{Lemma}
\theoremstyle{definition}
\newtheorem{defn}[thm]{Definition}
\theoremstyle{remark}
\newtheorem{rmk}[thm]{Remark}
\newcommand{\C}{Carath$\acute{\operatorname{e}}$odory}
\newcommand{\Rmnum}[1]{\expandafter\@slowromancap\romannumeral #1@}
\newcommand{\li}{\lim\limits_{z\rightarrow\partial\Omega}}
\newcommand{\K}{K$\ddot{\operatorname{a}}$hler}
\begin{document}
\title{K$\ddot{\operatorname{A}}$hlerity of invariant metrics on pseudoconvex domain of dimension two}

\author{Lang Wang\textsuperscript{1}}
\address{$1.$ School of Mathematical Sciences, Guizhou Normal University, Guiyang, 550025, P.R. China.}
\email{wanglang2020@amss.ac.cn}
\subjclass[2020]{32T25, 53C20}
\keywords{Kobayashi metric, \K\ metric, Holomorphic sectional curvature, Finite type}

\begin{abstract}

For a two dimensional bounded pseudoconvex domain of finite type, we prove uniformization theorems via \K-Kobayashi metric or \K-\C\ metric with quasi-finite geometry of order three. In particular, a pseudoconvex Reinhardt domain of finite type  is the unit ball if and only if the Bergman metric is a scalar multiple of the Kobayashi metric or \C\ metric. Moreover, we establish a rigidity theorem concerning holomorphic sectional curvature of Bergman metric and Lu constant.

\end{abstract}

\maketitle

\section{Introduction}
Let $\Omega$ be a bounded pseudoconvex domain in $\mathbb{C}^n$, there are a number of important invariant metrics on it: the Kobayashi metric, the \C\ metric, the Bergman metric and the \K-Einstein metric. All of these metrics coincide, up to a constant, on the unit ball. The following question arises naturally.
\medskip

\noindent{\textbf{Question.}} For which domain, the previous metrics are not coincide?
\medskip

In 1979, Cheng conjectured that if the Bergman metric on a smoothly bounded strongly pseudoconvex domain is \K-Einstein, then it is biholomorphically equivalent to the unit ball. Two dimensional case was proved by Fu-Wong\cite{fw} and Nemirovski\u i-Shafikov\cite{cr}. And this conjecture was completely confirmed by Huang-Xiao. Recently, Savale-Xiao \cite{sx} proved the case of two dimensional pseudoconvex domain of finite type.

\begin{thm}[{\cite[Theorem 1.1]{hx}}]
Suppose $\Omega\subset\mathbb{C}^n$ is a smoothly bounded strongly pseudoconvex domain, then the Bergman metric is K$\ddot{a}$hler-Einstein if and only if it is biholomorphically equivalent to the unit ball.
\end{thm}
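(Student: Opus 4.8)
The plan is to prove the two implications separately, the forward one being elementary. For the unit ball $\mathbb{B}^n$ the automorphism group acts transitively by biholomorphisms, and biholomorphisms are isometries of the Bergman metric $g$; hence the tensor $\mathrm{Ric}(g)-\lambda g$ is invariant under a transitive group. Evaluating at the origin, where the isotropy group $U(n)$ acts irreducibly on the holomorphic tangent space, every invariant Hermitian form is a scalar multiple of $g$, so $\mathrm{Ric}(g)=\lambda g$ at one point and, by homogeneity, with the same constant everywhere; a direct computation with $K=\tfrac{n!}{\pi^n}(1-|z|^2)^{-(n+1)}$ gives $\lambda=-1$. This settles ``ball $\Rightarrow$ \K-Einstein.''

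For the converse I would first recast the \K-Einstein condition analytically. Writing $g_{i\bar j}=\partial_i\partial_{\bar j}\log K$, the Ricci form is $-i\partial\bar\partial\log\det(g_{k\bar l})$, so $\mathrm{Ric}(g)=\lambda g$ is equivalent to $\partial_i\partial_{\bar j}\big(\log\det g+\lambda\log K\big)=0$. Since near $\partial\Omega$ both $\log\det g$ and $\log K$ blow up like $-(n+1)\log(-\rho)$ for a defining function $\rho$, and $\log(-\rho)$ is strictly plurisubharmonic there, pluriharmonicity forces the coefficient $1+\lambda$ of the leading singularity to vanish, i.e. $\lambda=-1$. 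The condition thus becomes that $\log(\det g/K)$ is pluriharmonic, equivalently $\det g=|h|^2K$ for a zero-free holomorphic $h$ on $\Omega$. As $\det g/K$ is a biholomorphic invariant, the whole problem is invariant and one may work in convenient local frames near $\partial\Omega$.

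The heart of the matter is to read off boundary CR data from the identity $\det g=|h|^2K$. I would use Fefferman's asymptotic expansion $K=\varphi(-\rho)^{-(n+1)}+\psi\log(-\rho)$, with $\varphi,\psi\in C^\infty(\overline\Omega)$ and $\varphi|_{\partial\Omega}>0$, together with the parallel expansion of $\det g$ obtained from the complex Monge--Amp\`ere (Fefferman/Cheng--Yau) defining function normalized by $J[\rho]=1+O(\rho^{n+1})$. Substituting both into $\det g=|h|^2K$ and exploiting that the right-hand side has \emph{pluriharmonic} logarithm, the $\partial\bar\partial$ annihilates the boundary terms carrying local CR curvature: one shows that the logarithmic coefficient $\psi$ and the subleading transverse-curvature terms must vanish up to the order at which the Chern--Moser tensor first enters. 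By the Fefferman program (Graham, Hirachi) this forces the Chern--Moser tensor of $\partial\Omega$ to vanish identically, so $\partial\Omega$ is locally CR-spherical.

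Finally I would globalize. Combining sphericity of $\partial\Omega$ with the \K-Einstein condition and the asymptotic flatness of the complete Bergman metric near a spherical boundary, one propagates the boundary flatness inward to conclude that $g$ has \emph{constant holomorphic sectional curvature}; then Lu Qi-Keng's uniformization theorem (a bounded domain whose complete Bergman metric has constant holomorphic sectional curvature is biholomorphic to $\mathbb{B}^n$) finishes the proof, tying the argument to the Lu constant. The main obstacle is precisely this middle-to-final passage: extracting the vanishing of the Chern--Moser tensor requires pushing Fefferman's expansion to high order and controlling the interaction between the globally defined $\log|h|^2$ and the locally defined CR invariants, and then propagating flatness from the smooth (non-real-analytic) boundary into the interior despite the ambiguity in the choice of $\rho$. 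This is the delicate microlocal/PDE analysis where the real work lies.
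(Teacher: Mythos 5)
A preliminary remark: the paper does not prove this statement at all --- it is quoted as \cite[Theorem 1.1]{hx} (Huang--Xiao), with the two-dimensional case attributed to \cite{fw} and \cite{cr}. So your proposal can only be measured against the literature, not against an argument in the text. Your outline does in fact track the genuine strategy behind the cited theorem: the forward direction via homogeneity is correct, the reduction of the \K-Einstein condition to pluriharmonicity of $\log(\det g/K)$ and the normalization $\lambda=-1$ are sound, and feeding this into Fefferman's expansion to extract CR-invariant boundary information is exactly the Huang--Xiao framework.

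However, there are two genuine gaps. First, the step ``pluriharmonicity kills $\psi$ and the subleading terms, hence by the Fefferman program the Chern--Moser tensor vanishes'' is not an argument; it is essentially the entire content of the cited theorem. In particular, the implication ``the log term of the Bergman kernel vanishes $\Rightarrow$ $\partial\Omega$ is spherical'' is the Ramadanov conjecture, which is known for $n=2$ (Graham) but open for $n\geq 3$; Huang--Xiao must precisely avoid invoking it, instead exploiting that under the Einstein condition the Bergman potential is tied to the Cheng--Yau Monge--Amp\`ere solution, and then running a delicate high-order analysis. Second, and more seriously, your final globalization would fail as stated. Sphericity of $\partial\Omega$ gives, via the developing map (Chern--Ji, as corrected by Nemirovski\u i--Shafikov \cite{cr}), that the \emph{universal cover} of $\Omega$ is the ball; it does not allow you to ``propagate flatness inward'' and conclude that the Bergman metric has constant holomorphic sectional curvature. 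The Bergman metric is a global object --- its curvature at interior points is not determined by the local CR geometry of the boundary --- and constant holomorphic sectional curvature is essentially the conclusion one is trying to reach, so the appeal to Lu Qi-Keng's theorem is circular at this stage. The remaining nontrivial step in all known proofs is to exclude nontrivial ball quotients, e.g.\ by analyzing how the Bergman kernel transforms under the covering map; this is the Fu--Wong \cite{fw} / Nemirovski\u i--Shafikov step in dimension two and a substantial part of \cite{hx} in general. In short, your plan identifies the correct framework but leaves both hard steps unproved, and the specific route proposed for the second one does not work.
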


As for invariant Finsler metrics, such as Kobayashi metric and \C\ metric, Gaussier-Zimmer firstly proved the following theorem.

\begin{thm}[{\cite[Theorem 1.5]{hartogs}}]\label{zi}
Let $\Omega\subset{\mathbb{C}^n}$ be a bounded strongly pseudoconvex domain with $C^2$ smooth boundary. Then the following are equivalent:

(1) $\Omega$ is biholomorphically equivalent to a ball quotient.

(2) the Kobayashi metric is a  K$\ddot{a}$hler metric,

(3) the Kobayashi metric is a  K$\ddot{a}$hler metric with constant holomorphic sectional curvature.

\end{thm}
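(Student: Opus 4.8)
The plan is to prove the cyclic chain $(1)\Rightarrow(3)\Rightarrow(2)\Rightarrow(3)\Rightarrow(1)$, in which only $(2)\Rightarrow(3)$ carries real content. The implication $(3)\Rightarrow(2)$ is immediate. For $(1)\Rightarrow(3)$, if $\Omega$ is biholomorphic to a quotient $\mathbb{B}^n/\Gamma$, then the Kobayashi metric of $\Omega$ is the push-down of the Kobayashi metric of $\mathbb{B}^n$, which is a constant multiple of the Bergman metric and hence the complex hyperbolic metric of constant holomorphic sectional curvature; the \K\ condition and constant holomorphic sectional curvature are local, $\Gamma$-invariant properties, so they descend to $\Omega$.

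The heart is $(2)\Rightarrow(3)$, and here I would argue by a two-sided pinching of the holomorphic sectional curvature $H$ of the (now smooth \K) Kobayashi metric $g$ at the value $-4$. The upper bound $H\le -4$ is the infinitesimal Schwarz lemma for the Kobayashi metric: since $g$ is assumed smooth, its holomorphic sectional curvature cannot exceed that of the Poincar\'e disc. For the lower bound $H\ge -4$, fix $p\in\Omega$ and a tangent direction $v$. Since a bounded pseudoconvex domain is taut, there is an extremal disc $\phi:\mathbb{D}\to\Omega$ with $\phi(0)=p$ realizing the Kobayashi metric in the direction $v$; by the distance-decreasing property the pullback $\phi^{\ast}g$ is dominated by the Poincar\'e metric on $\mathbb{D}$ with equality at $0$. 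A contact comparison of conformal metrics then shows that the Gaussian curvature of $\phi^{\ast}g$ at $0$ is $\ge -4$; since $\phi(\mathbb{D})$ is an immersed complex curve in the \K\ manifold $(\Omega,g)$, this Gaussian curvature is exactly the holomorphic sectional curvature of the induced metric, so the Gauss equation for complex submanifolds gives $H_g(p,v)\ge H_{\phi(\mathbb{D})}(v)\ge -4$. Combining the two bounds yields $H_g\equiv -4$, which is statement $(3)$.

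For $(3)\Rightarrow(1)$ I would invoke the classification of complex space forms. A smoothly bounded strongly pseudoconvex domain is complete Kobayashi hyperbolic, so $(\Omega,g)$ is a complete \K\ manifold; constant holomorphic sectional curvature determines the full curvature tensor, so passing to the universal cover $(\widetilde\Omega,\widetilde g)$ produces a complete simply connected \K\ manifold of constant holomorphic sectional curvature $-4$, which is holomorphically isometric to the complex hyperbolic ball $\mathbb{B}^n$. The deck group $\Gamma$ then acts by holomorphic isometries, hence as automorphisms of $\mathbb{B}^n$, and $\Omega\cong\mathbb{B}^n/\Gamma$ is a ball quotient; invariance of the Kobayashi metric under covering maps guarantees that the metric we began with is indeed the Kobayashi metric of this quotient.

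The main obstacle I expect is making the lower-bound step rigorous: one must guarantee that the extremal disc exists with enough regularity to be treated as a genuine immersed complex submanifold near its center, so that both the curvature-contact comparison and the Gauss equation apply. Lempert's complex-geodesic theory handles the convex case cleanly, but for a merely strongly pseudoconvex domain global geodesics are delicate; the saving observation is that the computation is purely infinitesimal at $p$, so only the $2$-jet of $\phi$ at $0$ is needed, together with the smoothness of $g$ granted by hypothesis $(2)$. A secondary technical point is justifying the upper bound $H\le -4$ under only the assumed regularity, for which the strong pseudoconvexity and boundary scaling asymptotics provide a safety net by pinning $H$ to $-4$ as $z\to\partial\Omega$.
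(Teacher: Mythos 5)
This statement is not proved in the paper at all: it is imported from Gaussier--Zimmer and used as a black box (indeed, the proof of Theorem \ref{kob} ends by reducing to it), so your attempt must be judged on its own terms, and it has a fatal gap exactly at the step you treat most casually. In $(2)\Rightarrow(3)$ you dispatch the upper bound $H\leq-4$ in one sentence as an ``infinitesimal Schwarz lemma for the Kobayashi metric.'' No such lemma exists, and the inequality you invoke is backwards: the universal Schwarz-lemma-type facts (Wong, Suzuki; quoted in Remark \ref{hsc} of this paper) say that the Kobayashi metric has generalized holomorphic sectional curvature $\geq-4$, while it is the \C\ metric whose curvature is $\leq-4$. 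Correspondingly, the extremal-disc/touching/Gauss-equation argument that you present as the delicate ``heart'' of the proof is precisely the standard proof of the \emph{easy} lower bound $H\geq-4$. The upper bound is the entire content of the theorem; it is where strong pseudoconvexity and the $C^2$ boundary must enter, and it cannot follow from soft comparison arguments --- if it did, the conclusion would hold for every bounded hyperbolic domain whose Kobayashi metric is a smooth K\"ahler metric, with no boundary hypotheses whatsoever, which would render vacuous both the hypotheses of the cited theorem and the extra assumptions (quasi-finite geometry of order three, asymptotic equality of $C_{\Omega}$ and $K_{\Omega}$) that the present paper adds precisely in order to force the curvature to $-4$.

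Your ``safety net'' does not close this gap either. Boundary scaling pins $H$ to $-4$ only in the limit as $z\rightarrow\partial\Omega$, and combined with the interior bound $H\geq-4$ this constrains nothing inside: the inequality points the wrong way for any maximum-principle propagation (a function $\geq-4$ tending to $-4$ at the boundary can well exceed $-4$ in the interior). Moreover, even the boundary pinning is nontrivial under your hypotheses: scaling limits of Kobayashi metrics are a priori only locally uniform, i.e. $C^0$ (Lemma \ref{stability}), and upgrading to the $C^2$ convergence needed to pass curvature to the limit is exactly what Proposition \ref{ka} requires the quasi-finite geometry hypothesis for --- a hypothesis absent from the theorem you are proving. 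The remaining implications in your chain are essentially fine: $(3)\Rightarrow(2)$ is trivial, $(1)\Rightarrow(3)$ follows from covering invariance of the Kobayashi metric, and $(3)\Rightarrow(1)$ follows from the classification of complete simply connected complex space forms together with the observation (which you should make explicit) that the constant must be negative since a bounded domain is Kobayashi hyperbolic. But without a genuine proof of $H\leq-4$, the argument does not establish the theorem.
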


Furthermore, Theorem \ref{zi} means that for a bounded strongly pseudoconvex domain with $C^2$ smooth boundary, if the Bergman metric is a scalar multiple of the Kobayashi metric, then it is biholomorphic to the unit ball. After Theorem \ref{zi}, Dong-Wang-Wong\cite{dww} proved a similar version of \C\ metric. Moreover, they also provided a rigidity theorem about holomorphic sectional curvature of Bergman metric with respect to Lu constant.
\medskip

For a bounded domain $\Omega\subset\mathbb{C}^n$, we say that Kobayashi metric  asymptotically  equals \C\ metric at infinity, if $\li \frac{C_{\Omega}(z,v)}{K_{\Omega}(z,v)}=1$ holds uniformly for $0\neq v\in\mathbb{C}^n$.

 Suppose $\Omega\subset\mathbb{C}^n$ is a strongly pseudoconvex domain with $C^2$ smooth boundary, then Kobayashi metric  asymptotically  equals \C\ metric at infinity. Moreover, if $\Omega\subset\mathbb{C}^2$ is a smoothly bounded pseudoconvex domain of finite type, which is biholomorphic to a ball quotient, then \cite[Proposition 1.3]{str} implies that Kobayashi metric  asymptotically  equals \C\ metric at infinity. Since the covering map is a local isometry from the unit ball to $\Omega$ with respect to Kobayashi metric and \K-Einstein metric, then Kobayashi metric on $\Omega$ is a \K\ metric with quasi-finite geometry of order infinity (see section \ref{sec2}). In this paper, we firstly prove the converse result as follows.

\begin{thm}\label{kob}
Let $\Omega$ be a smoothly bounded pseudoconvex domain of finite type in $\mathbb{C}^2$. Then the following are equivalent:

(1)\ $\Omega$ is biholomorphically equivalent to a ball quotient,

(2)\ the Kobayashi metric is a K$\ddot{a}$hler metric with quasi-finite geometry of order three and Kobayashi metric asymptotically equals \C\ metric at infinity,

(3)\ the Kobayashi metric is a K$\ddot{a}$hler metric  with constant holomorphic sectional curvature.

\end{thm}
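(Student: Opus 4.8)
The plan is to prove the cyclic chain of implications $(1)\Rightarrow(3)\Rightarrow(2)\Rightarrow(1)$, following the architecture of Theorem \ref{zi} but replacing the strong-pseudoconvexity input with the finite-type hypothesis and the quasi-finite-geometry assumption. For $(1)\Rightarrow(3)$, if $\Omega$ is biholomorphic to a ball quotient $\mathbb{B}^2/\Gamma$, then the covering map $\mathbb{B}^2\to\Omega$ is a local isometry for the Kobayashi metric; since the Kobayashi metric on $\mathbb{B}^2$ coincides with the Bergman (hence \K-Einstein) metric and has constant holomorphic sectional curvature, both \K erity and the constant curvature descend to $\Omega$. This direction is essentially formal once one notes that curvature and the \K\ property are local isometric invariants.

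For $(3)\Rightarrow(2)$, the main task is to extract, from constant holomorphic sectional curvature plus \K erity, the quasi-finite geometry of order three together with the asymptotic equality of the Kobayashi and \C\ metrics at infinity. I would first observe that a \K\ metric with constant holomorphic sectional curvature is automatically real-analytic (it locally satisfies the complex Monge--Amp\`ere-type equations forced by the curvature identity), and use the local description of complex space forms to read off control of all derivatives of the metric tensor up to order three on compact subsets pushed toward the boundary, which is exactly what quasi-finite geometry of order three records. The asymptotic equality $\li C_\Omega/K_\Omega=1$ should then follow from the boundary behavior of the Kobayashi metric on a finite-type domain: on the model/osculating domains the two metrics agree, and the finite-type scaling (Catlin-type estimates) propagates this to a limit of $1$ uniformly in the direction $v$.

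The crux is $(2)\Rightarrow(1)$, and this is where I expect the main obstacle to lie. The strategy is to upgrade the hypotheses into a statement that the Kobayashi metric is a complete \K-Einstein metric and then invoke a uniformization theorem forcing the universal cover to be $\mathbb{B}^2$. Concretely: starting from a \K\ Kobayashi metric with quasi-finite geometry of order three, I would compute its holomorphic sectional curvature near the boundary using a boundary-rescaling (non-isotropic dilation) argument adapted to finite type in $\mathbb{C}^2$; the asymptotic equality with the \C\ metric, combined with the known boundary asymptotics of the \C\ metric, should pin the curvature to the constant value $-4/(n+1)$ of the ball at the boundary. Quasi-finite geometry of order three supplies precisely the uniform $C^3$ control needed to pass the curvature computation to a limit and to run a Schwarz-lemma or maximum-principle comparison. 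Once the holomorphic sectional curvature is shown to be the constant ball value everywhere (not just at the boundary), one concludes via the classification of \K\ manifolds of constant holomorphic sectional curvature that the universal cover is biholomorphically isometric to $\mathbb{B}^2$, hence $\Omega$ is a ball quotient.

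The delicate points I anticipate are: (i) justifying that the finite-type scaling limits actually converge in $C^3$, which is where the order-three hypothesis is genuinely used and cannot be weakened casually; (ii) ruling out that the curvature is merely asymptotically constant at the boundary without being globally constant, which requires a rigidity or unique-continuation input for the \K-Einstein equation; and (iii) handling the passage from a local isometry to a global biholomorphism onto a quotient, ensuring the deck group $\Gamma$ acts properly discontinuously and the quotient is complete. I would structure the write-up so that the scaling/curvature computation is isolated as a lemma, the rigidity step invokes the complex-space-form classification, and the covering-space conclusion is handled last.
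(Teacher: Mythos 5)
Your $(1)\Rightarrow(3)$ is correct and matches the paper's remarks, but your crux step $(2)\Rightarrow(1)$ has a genuine gap. Your plan is to show that the holomorphic sectional curvature of the Kobayashi metric is \emph{globally} constant and then invoke the classification of complex space forms; however, the boundary-rescaling argument can only ever produce the boundary asymptotics $\li H(K_\Omega)(z,v)=-4$. You yourself flag that passing from ``asymptotically constant at the boundary'' to ``constant everywhere'' needs ``a rigidity or unique-continuation input,'' and no such input is available: under hypothesis (2) the Kobayashi metric is merely K\"ahler with uniform $C^3$ control, not K\"ahler--Einstein, so there is no elliptic equation on which to run unique continuation, and a complete K\"ahler metric whose curvature tends to a constant at the boundary need not be a space form. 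The paper circumvents exactly this obstacle. It proves, via the scaling sequence, stability of $K_{\Omega_n}$ (Lemma \ref{stability}), stability of $C_{\Omega_n}$ (Lemma \ref{ca} --- this is precisely where the asymptotic equality hypothesis enters, forcing $K_{\Omega_\infty}=C_{\Omega_\infty}$ and hence $H(K_{\Omega_\infty})=-4$ on the model domain), and Proposition \ref{ka} (which converts quasi-finite geometry of order three into local $C^2$ convergence of the metrics), that $H(K_{\Omega_n})\to H(K_{\Omega_\infty})=-4$, i.e.\ $\li H(K_\Omega)(z,v)=-4$ uniformly. It then invokes \cite[Theorem 1.2]{str}: a smoothly bounded finite-type domain in $\mathbb{C}^2$ whose Kobayashi metric has boundary-asymptotic curvature $-4$ is \emph{strongly pseudoconvex}; after that, Gaussier--Zimmer (Theorem \ref{zi}) applies and yields the ball quotient. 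In short, the finite-type hypothesis is consumed by a reduction to strong pseudoconvexity, not by a global curvature rigidity statement; without this reduction your argument cannot close.

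Two further errors. First, your target constant is wrong: with the normalization used here, the Kobayashi metric of $\mathbb{B}^n$ has holomorphic sectional curvature $-4$, not $-4/(n+1)$ (that is the Bergman normalization), and the squeezing mechanism $H(K)\geq-4\geq H(C)$ on which everything rests depends on getting this constant right. Second, in your $(3)\Rightarrow(2)$ the claim that the Kobayashi and Carath\'eodory metrics agree on the finite-type model domains $\left\{\operatorname{Re}w+P(z)<0\right\}$ is false in general: $P$ is only subharmonic, such models need not be convex, and Lempert's theorem does not apply. The paper instead proves $(3)\Rightarrow(1)$ directly --- completeness of $K_\Omega$, constant curvature, the space-form classification, and Kobayashi hyperbolicity force the universal cover to be $\mathbb{B}^2$ --- and obtains $(1)\Rightarrow(2)$ from the covering map together with \cite[Proposition 1.3]{str}. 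If you keep your cyclic structure, route $(3)\Rightarrow(2)$ through $(1)$ rather than arguing it directly.
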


Since \cite[Proposition 3.11]{str}, then the Bergman metric on $\Omega$ has quasi-finite geometry of order infinity, where $\Omega\subset\mathbb{C}^2$ is a smoothly bounded pseudoconvex domain of finite type. As a corollary, the following holds.

\begin{cor}
Suppose $\Omega\subset\mathbb{C}^2$ is a smoothly bounded domain of finite type, then the following are equivalent:

(1) $\Omega$ is biholomorphic to the unit ball,

(2) the Bergman metric is a scalar multiple of the Kobayashi metric and Kobayashi metric asymptotically equals Carath$\acute{e}$odory metric at infinity.
\end{cor}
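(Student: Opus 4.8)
The plan is to derive both implications from Theorem \ref{kob}, combined with two classical facts: the Bergman metric is always \K, and a bounded domain whose complete Bergman metric has constant holomorphic sectional curvature must be biholomorphic to the ball.

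For the direction $(1)\Rightarrow(2)$ I would argue directly on $\mathbb{B}^2$. Since a biholomorphism is an isometry for each of these invariant metrics, it suffices to verify the two assertions on the ball itself. On $\mathbb{B}^2$ the \C\ and Kobayashi metrics coincide (the ball is convex, so Lempert's theorem applies), hence $C_{\Omega}(z,v)/K_{\Omega}(z,v)\equiv 1$ and the asymptotic equality at infinity holds trivially. Moreover the Bergman metric of $\mathbb{B}^2$ is a fixed positive constant multiple of the Poincar\'e--Bergman metric underlying the Kobayashi metric, so $B_{\Omega}=c\,K_{\Omega}$ for a suitable $c>0$.

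The substance lies in $(2)\Rightarrow(1)$, and my strategy is to verify hypothesis $(2)$ of Theorem \ref{kob}. Write $B_{\Omega}=c\,K_{\Omega}$. Because the Bergman metric is \K, any positive scalar multiple of it is again \K, so $K_{\Omega}=c^{-1}B_{\Omega}$ is a \K\ metric. Next, by \cite[Proposition 3.11]{str} the Bergman metric of $\Omega$ has quasi-finite geometry of order infinity; as this property is unchanged under multiplication of the metric by a fixed positive constant, $K_{\Omega}=c^{-1}B_{\Omega}$ also has quasi-finite geometry of order infinity, in particular of order three. Combining this with the standing assumption that the Kobayashi metric asymptotically equals the \C\ metric at infinity, condition $(2)$ of Theorem \ref{kob} is met, so $\Omega$ is biholomorphic to a ball quotient.

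It remains to upgrade ``ball quotient'' to ``ball'', which is the only genuine obstacle, and here I would exploit the scalar relation a second time. By the equivalence $(1)\Leftrightarrow(3)$ in Theorem \ref{kob}, the Kobayashi metric on $\Omega$ is \K\ with constant holomorphic sectional curvature, and it is complete, being comparable to the hyperbolic metric lifted from $\mathbb{B}^2$. The identity $B_{\Omega}=c\,K_{\Omega}$ then transfers both properties to the Bergman metric: rescaling a \K\ metric by a positive constant preserves completeness and preserves constancy of the holomorphic sectional curvature (the curvature is merely multiplied by $c^{-1}$, hence stays constant). Thus $B_{\Omega}$ is a complete Bergman metric of constant holomorphic sectional curvature, and by the classical uniformization theorem of Lu Qi-Keng, $\Omega$ is biholomorphic to the unit ball $\mathbb{B}^2$. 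The only point requiring care is the completeness input to Lu's theorem, but this is automatic here since $K_{\Omega}$ is complete and $B_{\Omega}$ is a constant multiple of it.
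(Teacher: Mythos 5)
Your proof is correct and follows the route the paper intends: \cite[Proposition 3.11]{str} gives quasi-finite geometry of order infinity for the Bergman metric, the scalar relation transfers K\"ahlerity and quasi-finite geometry to the Kobayashi metric, and Theorem \ref{kob} together with the asymptotic equality hypothesis yields a ball quotient. Your final step --- upgrading from ball quotient to ball by noting that $B_{\Omega}=cK_{\Omega}$ is then a complete Bergman metric of constant holomorphic sectional curvature and invoking Lu Qi-Keng's uniformization theorem --- correctly supplies the one detail the paper leaves implicit (the only slip is cosmetic: the curvature rescales by $c^{-2}$, not $c^{-1}$, which changes nothing).
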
 

Moreover,  the following uniformazition theorem with respect to K$\ddot{\operatorname{a}}$hler-Einstien metric holds.

\begin{cor}\label{kem}
Suppose $\Omega\subset\mathbb{C}^2$ is a smoothly bounded domain of finite type, then the following are equivalent:

(1) $\Omega$ is biholomorphic to a ball quotient,

(2) the K$\ddot{a}$hler-Einstein metric is a scalar multiple of the Kobayashi metric and Kobayashi metric asymptotically equals Carath$\acute{e}$odory metric at infinity.
\end{cor}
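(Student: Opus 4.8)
The plan is to deduce this corollary directly from Theorem \ref{kob}, by matching hypothesis (2) of the corollary with hypothesis (2) of that theorem; the only genuinely new content is translating the scalar relation between the \K-Einstein metric and the Kobayashi metric into the quasi-finite geometry condition required there.

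For the implication $(1)\Rightarrow(2)$ I would proceed as follows. Suppose $\Omega$ is biholomorphic to a ball quotient $\mathbb{B}^2/\Gamma$ and let $\pi\colon\mathbb{B}^2\to\Omega$ be the covering map. Since $\pi$ is a local biholomorphism and both the Kobayashi metric and the \K-Einstein metric are biholomorphically invariant, $\pi$ is a local isometry for each. On $\mathbb{B}^2$ these two metrics agree up to the universal dimensional constant, so pushing the relation down through $\pi$ yields $g_{KE}=c\,K_{\Omega}$ on $\Omega$ for some constant $c>0$. The asymptotic equality $\li \frac{C_{\Omega}(z,v)}{K_{\Omega}(z,v)}=1$ then holds by \cite[Proposition 1.3]{str}, exactly as recorded in the discussion preceding Theorem \ref{kob}. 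This gives (2).

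For the converse $(2)\Rightarrow(1)$ I would assume $g_{KE}=c\,K_{\Omega}$ for a constant $c>0$. Because the \K-Einstein metric is \K, its positive scalar multiple $K_{\Omega}=c^{-1}g_{KE}$ is again a \K\ metric, so the Kobayashi metric is \K. The key analytic input I need is that on a smoothly bounded pseudoconvex domain of finite type in $\mathbb{C}^2$ the \K-Einstein metric produced by Cheng--Yau has quasi-finite geometry of order three; I expect this to follow from the same boundary regularity machinery underlying \cite[Proposition 3.11]{str}, applied to the \K-Einstein potential in place of the Bergman kernel. Granting this, $K_{\Omega}$ inherits quasi-finite geometry of order three from $g_{KE}$, since the two differ only by the constant factor $c^{-1}$. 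Combined with the assumed asymptotic equality of the Kobayashi and \C\ metrics at infinity, hypothesis (2) of Theorem \ref{kob} is verified, and that theorem then yields that $\Omega$ is biholomorphic to a ball quotient.

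The main obstacle is precisely the claim that the \K-Einstein metric on a finite type, rather than strongly pseudoconvex, domain in $\mathbb{C}^2$ has quasi-finite geometry of order three. For strongly pseudoconvex boundaries this is classical through Fefferman's expansion together with the work of Cheng--Yau and Lee--Melrose, but at boundary points of higher type the asymptotics of the Cheng--Yau solution are considerably more delicate, and uniformly controlling the covariant derivatives of the metric up to order three in boundary-adapted scaling coordinates near such points is the crux of the argument.
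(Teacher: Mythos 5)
Your overall architecture coincides with the paper's: both directions reduce to Theorem \ref{kob}, and your treatment of $(1)\Rightarrow(2)$ (covering map as a local isometry for the Kobayashi and \K-Einstein metrics, plus \cite[Proposition 1.3]{str} for the asymptotic equality) is exactly what the paper does. However, in $(2)\Rightarrow(1)$ you leave a genuine gap at precisely the step you flag as the crux: you never establish that $(\Omega,g_{KE})$ has quasi-finite geometry of order three, and the route you sketch for doing so --- boundary asymptotics of the Cheng--Yau solution in the spirit of Fefferman and Lee--Melrose, adapted to finite-type boundary points --- is not available at points of higher type and is not how the paper proceeds. As written, your argument proves the corollary only modulo an unproved (and, via your proposed method, likely intractable) analytic claim.

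The idea you are missing is that no boundary analysis of $g_{KE}$ is needed at all: quasi-finite geometry can be \emph{transferred} from the Bergman metric. The paper invokes \cite[Proposition 3.11]{str}, which supplies, for every $x\in\Omega$, a holomorphic chart $\varphi_x\colon B_r(0)\to\Omega$ with $\varphi_x(0)=x$ and $\frac{1}{A}g_0\leq\varphi_x^{*}B_{\Omega}\leq Ag_0$, with $r,A$ independent of $x$. Since $g_{KE}$ is uniformly equivalent to $B_{\Omega}$ on such a domain, the pullback $\varphi_x^{*}g_{KE}$ is uniformly comparable to the Euclidean metric in every chart; the complex Monge--Amp\`ere equation satisfied by $g_{KE}$ then yields, by interior elliptic estimates as in \cite[Lemma 3]{us} applied in the coordinates $\left(B_r(0),\varphi_x\right)$, uniform bounds on all derivatives of $\varphi_x^{*}g_{KE}$ on a smaller ball. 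This gives quasi-finite geometry of order \emph{infinity} for $(\Omega,g_{KE})$, hence of order three for $K_{\Omega}=c^{-1}g_{KE}$, and Theorem \ref{kob} finishes the proof. The contrast is worth internalizing: quasi-finite geometry is a purely interior, uniform-chart condition, so once uniform charts and a uniform two-sided metric bound are available (here, courtesy of the Bergman metric), the elliptic equation does the rest --- no Fefferman-type expansion at the boundary is required.
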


As for \C\ metric, we obtain the following theorem by using a similar proof of Theorem \ref{kob}.

\begin{thm}\label{car}
Let $\Omega$ be a smoothly bounded pseudoconvex domain of finite type in $\mathbb{C}^2$. Then the following are equivalent:

(1)\ $\Omega$ is biholomorphically equivalent to the unit ball,

(2)\ the Carath$\acute{e}$odory metric is a K$\ddot{a}$hler metric with quasi-finite geometry of order three and Kobayashi metric asymptotically equals Carath$\acute{e}$odory metric at infinity,

(3)\ the Carath$\acute{e}$odory metric on $\Omega$ is a K$\ddot{a}$hler metric  with constant holomorphic sectional curvature,

(4) the Bergman metric or K$\ddot{a}$hler-Einstein metric is a scalar multiple of the Carath$\acute{e}$odory metric and Kobayashi metric asymptotically equals Carath$\acute{e}$odory metric at infinity.
\end{thm}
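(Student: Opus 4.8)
The plan is to prove the equivalences through the unit ball as a hub, reducing the finite-type hypothesis to the strongly pseudoconvex case, where the \C\ analogue of Theorem \ref{zi} due to Dong--Wang--Wong \cite{dww} is available; the argument runs parallel to the proof of Theorem \ref{kob}. The implication $(1)\Rightarrow(2),(3),(4)$ is the routine direction: on the ball $B\subset\mathbb{C}^2$ the \C, Kobayashi, Bergman and \K-Einstein metrics all coincide up to a constant with the Poincar\'e--Bergman metric, which is \K\ with constant negative holomorphic sectional curvature; the ball is strongly pseudoconvex, so the asymptotic equality of the Kobayashi and \C\ metrics at the boundary holds by the statement quoted in the introduction, and the metric has quasi-finite geometry of order infinity. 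Biholomorphic invariance then gives all target conditions at once.

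The substance lies in the converse. Assuming $(2)$ or $(3)$, I would fix a boundary point $p$ of finite type $2m$ and apply the Catlin normal form together with the anisotropic parabolic dilations adapted to $p$ to rescale $\Omega$ along a sequence $z_j\to p$ to a model domain
\[
M_P=\{(z,w)\in\mathbb{C}^2:\ \operatorname{Re} w+P(z)<0\},
\]
with $P$ a subharmonic, non-harmonic real polynomial of degree $2m$. The key use of quasi-finite geometry of order three is that it forces $C^2$ convergence of the rescaled \C\ metric tensor to a metric on $M_P$; since Kählerity is a closed condition on the associated $(1,1)$-form it passes to the limit, and the hypothesis $C_\Omega\sim K_\Omega$ at the boundary identifies this limit with the scaling limit of the Kobayashi metric, hence with the intrinsic invariant metric of $M_P$. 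Because the convergence is $C^2$, the holomorphic sectional curvature passes to the limit: in case $(3)$ this limit is constant outright, while in case $(2)$ the identification with the Kobayashi scaling limit together with the \K\ condition pins the limiting curvature to the ball constant.

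The decisive step is then a rigidity statement for the model: a polynomial model $M_P$ carries an invariant \K\ metric of constant holomorphic sectional curvature only when $P$ is equivalent to $|z|^2$, i.e. $m=1$. I would prove this by computing the curvature of the limiting metric directly on $M_P$ and checking that constancy forces the differential invariants built from $P$ to coincide with those of the quadric, any genuine higher-order term in $P$ producing a non-constant holomorphic sectional curvature. This is the step I expect to be the main obstacle: it demands a workable description of the invariant \K\ metric on a general subharmonic polynomial model and, just as importantly, enough uniformity in the rescaling to transport the $C^2$ bound on the metric --- and hence its curvature --- to the limit, which is precisely what the order-three hypothesis is designed to supply. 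The conclusion is that every boundary point has type two, so $\Omega$ is strongly pseudoconvex.

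With strong pseudoconvexity in hand, the \C\ version of Theorem \ref{zi} of Dong--Wang--Wong \cite{dww} applies and yields that $\Omega$ is biholomorphic to the unit ball; unlike the Kobayashi metric the \C\ metric does not descend to a nontrivial ball quotient (a proper quotient has too few bounded holomorphic functions to reproduce the ball metric), which is why $(1)$ here is the ball itself rather than a ball quotient. Finally, for $(4)$ I would use that the Bergman metric is always \K\ and, by \cite[Proposition 3.11]{str}, has quasi-finite geometry of order infinity on a finite-type domain; therefore if the Bergman or \K-Einstein metric is a scalar multiple of $C_\Omega$, then $C_\Omega$ is \K\ with quasi-finite geometry of order three, returning us to hypothesis $(2)$, while $(1)\Rightarrow(4)$ supplies the converse. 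This closes the cycle of equivalences.
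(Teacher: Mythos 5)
Your overall architecture (reduce to strong pseudoconvexity of $\Omega$, then invoke the Carath\'eodory analogue of Theorem \ref{zi} from \cite{dww}, and fold condition (4) back into condition (2) via quasi-finite geometry of the Bergman metric) matches the paper's. But there is a genuine gap at exactly the step you yourself call decisive: the rigidity claim that a polynomial model $M_P$ carrying an invariant K\"ahler metric of constant holomorphic sectional curvature forces $P\sim|z|^2$. You assert this and propose to prove it by ``computing the curvature of the limiting metric directly on $M_P$,'' but no such computation is available --- there is no workable explicit description of the Kobayashi or Carath\'eodory metric on a general subharmonic polynomial model --- and you concede the point by flagging it as the main obstacle. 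The paper never proves, and never needs, this rigidity. Its route is: on the model, $C_{\Omega_\infty}=K_{\Omega_\infty}$ (the content of Lemma \ref{ca}, which uses the asymptotic equality hypothesis); since $H(K_{\Omega_\infty})\geq-4$ while $H(C_{\Omega_\infty})\leq-4$ (Remark \ref{hsc}), the common limit metric has holomorphic sectional curvature exactly $-4$ with no computation at all; the $C^2$ convergence supplied by Proposition \ref{ka} (this is where order-three quasi-finite geometry enters) then transports this back to the original domain, yielding $\lim_{z\rightarrow\partial\Omega}H(C_{\Omega})(z,v)=-4$ uniformly in $v\neq0$; and strong pseudoconvexity then follows by citing Theorem 1.2 of \cite{str}, which is precisely the hard content you set out to reprove from scratch. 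In other words, the curvature conclusion is pulled back to $\Omega$ and handed to an existing theorem, rather than settled on the model.

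A second, independent flaw: you treat cases (2) and (3) together, but your scaling argument uses both the quasi-finite-geometry hypothesis (to get $C^2$ convergence) and the asymptotic equality $C_\Omega\sim K_\Omega$ at the boundary (to identify the limit with the Kobayashi scaling limit), and neither is assumed in (3). The paper handles $(3)\Rightarrow(1)$ without any scaling: since $H(C_\Omega)\leq-4$ always, a constant holomorphic sectional curvature is automatically negative, and then \cite[Theorem 1.2]{str} gives strong pseudoconvexity directly, after which Theorem 1.9 of \cite{dww} finishes. So as written your plan does not prove $(3)\Rightarrow(1)$ at all, and $(2)\Rightarrow(1)$ rests on an unproven rigidity theorem.
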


Suppose $\Omega\subset\mathbb{C}^n$ is a bounded domain, and $B_{\Omega}(z,v)$ is Bergman metric on $\Omega$. Then we have $B_{\Omega}(z,v)\geq C_{\Omega}(z,v)$ for each $(z,v)\in\Omega\times\mathbb{C}^n$, where $C_{\Omega}$ is the \C\ metric on $\Omega$. For the optimal control of these metrics, it is natural to consider the Lu constant $L(\Omega)$ (see section \ref{metric}). Combining with the holomorphic sectional curvature of Bergman metric and Lu constant, we obtain the following result.
\begin{thm}\label{bc}
Let $\Omega$ be a smoothly bounded domain of finite type in $\mathbb{C}^2$, then the following are equivalent:

(1) $\Omega$ is biholomorphic to the unit ball,

(2)  the holomorphic sectional curvature of $B_{\Omega}(z,v)$ is bounded above by $-4(L(\Omega))^2$ and Kobayashi metric asymptotically equals Carath$\acute{e}$odory metric at infinity.
\end{thm}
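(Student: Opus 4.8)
The plan is to deduce the substantial implication $(2)\Rightarrow(1)$ from a pointwise comparison between the holomorphic sectional curvature of the Bergman metric and the ratio $C_{\Omega}/B_{\Omega}$, use it to force proportionality of the Bergman and \C\ metrics, and then let Theorem \ref{car} finish the argument; the reverse implication $(1)\Rightarrow(2)$ will be handled by transporting everything to the ball.

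For $(1)\Rightarrow(2)$ I would fix a biholomorphism $F:\Omega\to\mathbb{B}^2$ and use that the Bergman metric, its holomorphic sectional curvature, the \C\ metric, the Kobayashi metric, and hence the Lu constant are all biholomorphic invariants; it therefore suffices to verify (2) on $\mathbb{B}^2$. There the Bergman metric is the complex hyperbolic metric, so $H_{B}$ is a negative constant, and by homogeneity the ratio $C_{\mathbb{B}^2}/B_{\mathbb{B}^2}$ is the constant $\tfrac{1}{\sqrt3}$, whence $L(\Omega)=L(\mathbb{B}^2)=\tfrac1{\sqrt3}$ and $H_B\equiv -4(L(\Omega))^2=-\tfrac43$; thus the curvature bound holds, with equality. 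Since the \C\ and Kobayashi metrics coincide on $\mathbb{B}^2$, their ratio is identically $1$, so the asymptotic equality at infinity holds as well, and invariance carries both conclusions back to $\Omega$.

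For $(2)\Rightarrow(1)$ the crucial ingredient is the pointwise estimate
\[
H_{B}(z,v)\ \ge\ -4\left(\frac{C_{\Omega}(z,v)}{B_{\Omega}(z,v)}\right)^{2},\qquad (z,v)\in\Omega\times(\mathbb{C}^2\setminus\{0\}),
\]
which holds on any bounded domain and underlies the Lu-constant rigidity of \cite{dww}. With $L(\Omega)=\sup_{(z,v)}C_{\Omega}(z,v)/B_{\Omega}(z,v)$ and the basic bound $C_{\Omega}\le B_{\Omega}$, this gives $H_B(z,v)\ge -4(L(\Omega))^2$ everywhere, so the hypothesis $H_B\le -4(L(\Omega))^2$ forces equality at every point and direction. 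Reading the equality back through the displayed estimate yields $C_{\Omega}(z,v)=L(\Omega)\,B_{\Omega}(z,v)$ throughout $\Omega\times(\mathbb{C}^2\setminus\{0\})$, i.e. the Bergman metric is a scalar multiple of the \C\ metric. Together with the assumed asymptotic equality of the Kobayashi and \C\ metrics at infinity, this is precisely condition (4) of Theorem \ref{car}, so $\Omega$ is biholomorphic to the unit ball.

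I expect the main obstacle to be the displayed pointwise inequality together with its equality analysis in the finite-type category: one must confirm that the Lu-constant estimate of \cite{dww}, proved there for strongly pseudoconvex domains, is genuinely a pointwise curvature bound and so persists for smoothly bounded finite-type domains in $\mathbb{C}^2$, and that equality at a point forces exact proportionality $C_{\Omega}=L(\Omega)B_{\Omega}$ there rather than merely asymptotic agreement. A second point requiring care is that the supremum defining $L(\Omega)$ be realized as the common constant value of the ratio once global equality holds, so that the proportionality—and hence the hypotheses of Theorem \ref{car}(4)—hold literally; the finite-type hypothesis and the asymptotic equality of $C_{\Omega}$ and $K_{\Omega}$ at the boundary are what secure the boundary behavior needed to invoke Theorem \ref{car}.
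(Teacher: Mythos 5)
You have a genuine gap, and it sits exactly at the step you yourself flagged as the main obstacle. The displayed pointwise inequality
\[
H(B_{\Omega})(z,v)\ \ge\ -4\left(\frac{C_{\Omega}(z,v)}{B_{\Omega}(z,v)}\right)^{2}
\]
is asserted, not proved, and it is neither what \cite{dww} proves nor true in general. Theorem 1.10 of \cite{dww} is a global rigidity statement for \emph{strongly pseudoconvex} domains, established through boundary asymptotics (Klembeck's curvature limit $H(B_\Omega)\to -4/(n+1)$ and the boundary behaviour of $C_\Omega/B_\Omega$), not through a pointwise curvature estimate valid on all bounded domains. The classical pointwise relation between Bergman curvature and the ratio $C_\Omega/B_\Omega$ goes the \emph{opposite} way: multiplying a Bergman-extremal function by a \C\ extremal function in Fuks' curvature formula yields an \emph{upper} bound of the form $H(B_\Omega)\le 2-c\,(C_\Omega/B_\Omega)^2$, $c>0$; a lower bound of your type would require bounding the second-order $L^2$ extremal problem from above by sup-norm data, for which there is no mechanism. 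Decisively: since $C_\Omega\le B_\Omega$, your inequality would imply the universal bound $H(B_\Omega)\ge -4$ for every bounded domain, and this is known to be false --- there exist bounded pseudoconvex (Hartogs-type) domains whose Bergman holomorphic sectional curvature is unbounded from below (Herbort's examples). So the chain forcing $C_\Omega=L(\Omega)B_\Omega$ has no support, and with it the reduction to Theorem \ref{car}(4) collapses; the $(1)\Rightarrow(2)$ direction and the final appeal to Theorem \ref{car} are fine, but they are not where the content lies.

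It is instructive to compare with what the paper actually does, because it shows why your placement of the second hypothesis is also off. The paper never uses a pointwise curvature--ratio inequality. It first proves that condition (2) forces strong pseudoconvexity at every boundary point $p$, via the scaling sequence: the Bergman metrics $B_{\Omega_n}$ converge to a complete K\"ahler metric $B_\infty$ on the model domain $\Omega_\infty$; the Lu inequality $C_\Omega\le L(\Omega)B_\Omega$ passes to the limit to give $C_{\Omega_\infty}\le L(\Omega)B_\infty$; the curvature hypothesis survives the limit and the Ahlfors--Schwarz lemma gives $K_{\Omega_\infty}\ge L(\Omega)B_\infty$; and the hypothesis that $K_\Omega$ asymptotically equals $C_\Omega$ at infinity enters precisely here, through Lemma \ref{ca}, to force $C_{\Omega_\infty}=K_{\Omega_\infty}$, hence $C_{\Omega_\infty}=L(\Omega)B_\infty=K_{\Omega_\infty}$, so that $\Omega_\infty$ is the ball and $p$ is a strongly pseudoconvex point. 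Only after strong pseudoconvexity is established does the paper invoke \cite{dww} Theorem 1.10, in the setting where it is actually proved. In your argument the asymptotic equality of $C_\Omega$ and $K_\Omega$ is used only as a hypothesis check for Theorem \ref{car}(4); the fact that it plays no role in your equality analysis is a symptom that the real work --- which the paper performs with the scaling/model-domain argument --- has been hidden inside the unproven inequality.
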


Let  $\Omega$ be a bounded domain in $\mathbb{C}^n$ and $z\in\partial\Omega$ be a boundary point, we say that $\Omega$ is locally convexifiable at $z$, if  there exists a neighbourhood $U$ of $z$ and a biholomorphic mapping $\Phi$ from $U$ into $\mathbb{C}^n$ such that $\Phi(U\cap\Omega)$ is convex.

For a smoothly bounded pseudoconvex Reinhardt domain of finite type in $\mathbb{C}^2$, Theorem 3.2 in \cite{fu} implies that it is locally convexifiable at every boundary point. Combining with the proofs of  \cite[Theorem 1.2]{ni} and \cite[Theorem 1]{loc}, we know that for such domain, Kobayashi metric asymptotically equals \C\ metric at infinity. Thus we obtain the following result.
\begin{thm}
Let $\Omega$ be a smoothly bounded pseudoconvex Reinhardt domain of finite type in $\mathbb{C}^2$, then the following are equivalent:

(1)\ $\Omega$ is biholomorphically equivalent to a ball quotient,

(2)\ the Kobayashi metric is a K$\ddot{a}$hler metric with quasi-finite geometry of order three,

(3) the K$\ddot{a}$hler-Einstein metric is a scalar multiple of Kobayashi metric,

(4)\ the Kobayashi metric is a K$\ddot{a}$hler metric  with constant holomorphic sectional curvature.

Moreover, the following are also equivalent:

(4)\ $\Omega$ is biholomorphically equivalent to the unit ball,

(5)\ the Carath$\acute{e}$odory metric is a K$\ddot{a}$hler metric with quasi-finite geometry of order three,

(6)\ the Carath$\acute{e}$odory metric  is a K$\ddot{a}$hler metric  with constant holomorphic sectional curvature,

(7) the Bergman metric or K$\ddot{a}$hler-Einstein metric is a scalar multiple of the Kobayashi metric or Carath$\acute{e}$odory metric,

(8)\ the holomorphic sectional curvature of Bergman metric is bounded above by $-4(L(\Omega))^2$.
\end{thm}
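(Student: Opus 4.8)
The plan is to deduce the theorem from the four results already established in full generality---Theorem \ref{kob}, Corollary \ref{kem}, Theorem \ref{car}, and Theorem \ref{bc}---by showing that for a smoothly bounded pseudoconvex Reinhardt domain of finite type in $\mathbb{C}^2$ the hypothesis that the Kobayashi metric asymptotically equals the \C\ metric at infinity holds automatically. Granting this, that hypothesis may be removed from the cited statements, and both blocks of equivalences follow by direct substitution.

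The crux, and the step I expect to be the main obstacle, is therefore the verification of the asymptotic equality. By Theorem 3.2 of \cite{fu}, such a domain $\Omega$ is locally convexifiable at every boundary point: for each $p\in\partial\Omega$ there exist a neighborhood $U$ and a biholomorphism $\Phi$ with $\Phi(U\cap\Omega)$ convex. Local convexity permits a comparison of the invariant metrics near $p$ with those of a convex model, and the localization and scaling estimates carried out in the proofs of \cite[Theorem 1.2]{ni} and \cite[Theorem 1]{loc} then give $C_{\Omega}(z,v)/K_{\Omega}(z,v)\to1$ as $z\to p$, uniformly in $v$. Covering the compact boundary $\partial\Omega$ by finitely many such charts and patching yields $\li C_{\Omega}(z,v)/K_{\Omega}(z,v)=1$ uniformly for $0\neq v\in\mathbb{C}^2$. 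The delicate point is to make these boundary comparisons uniform across the finitely many convexifying charts and to check that the scaling used for the convex models transfers back to $\Omega$ without corrupting the finite-type exponents.

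With the asymptotic equality in hand the first block is immediate: Theorem \ref{kob} gives $(1)\Leftrightarrow(2)\Leftrightarrow(4)$ and Corollary \ref{kem} adjoins $(3)$. For the second block, Theorem \ref{car} yields that $\Omega$ is the unit ball if and only if $(5)$, if and only if $(6)$, if and only if the Bergman or \K-Einstein metric is a scalar multiple of the \C\ metric, and Theorem \ref{bc} supplies $(8)$. The remaining cases of $(7)$ are those involving the Kobayashi metric: the corollary following Theorem \ref{kob} shows that ``Bergman $=$ scalar $\times$ Kobayashi'' already gives the unit ball, whereas Corollary \ref{kem} shows that ``\K-Einstein $=$ scalar $\times$ Kobayashi'' gives only a ball quotient, so this last case requires upgrading ``ball quotient'' to ``unit ball.'' For that I would invoke the Reinhardt symmetry directly: the rotation torus $T^2$ acts on $\Omega$, and lifting this compact action to the universal cover $\mathbb{B}^2$ places it, up to conjugacy, inside the maximal compact subgroup of $PU(2,1)$ fixing the center; a connectedness argument then forces the deck group $\Gamma$ to be centralized by $T^2$, hence to be a discrete subgroup of a torus, hence finite, hence trivial by the Cartan fixed-point theorem. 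Thus a bounded Reinhardt domain biholomorphic to a ball quotient is the unit ball, which closes the second block.
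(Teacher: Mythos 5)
Your reduction is the same as the paper's: for a smoothly bounded pseudoconvex Reinhardt domain of finite type in $\mathbb{C}^2$, local convexifiability (\cite[Theorem 3.2]{fu}) combined with the arguments of \cite[Theorem 1.2]{ni} and \cite[Theorem 1]{loc} makes the hypothesis ``Kobayashi asymptotically equals Carath\'eodory at infinity'' automatic, after which Theorem \ref{kob}, Corollary \ref{kem}, Theorem \ref{car} and Theorem \ref{bc} give both blocks. You are also right, and more careful than the paper (whose one-paragraph derivation passes over this point), in noticing that one case of (7) is not covered by these citations: Corollary \ref{kem} turns ``K\"ahler--Einstein $=$ scalar $\times$ Kobayashi'' into ``ball quotient'' only, so the second block additionally needs the implication ``Reinhardt ball quotient $\Rightarrow$ unit ball.''

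But your proof of that implication is incorrect, and in fact the implication is false. The error is the lifting step: a compact group acting on $\Omega$ does not lift to a compact group acting on the universal cover; the lifts of $T^2$ form an extension of $T^2$ by $\Gamma$ whose identity component may be non-compact, so it need not be conjugate into $U(2)$ and its centralizer need not be a compact torus. This is exactly what happens for the annulus when $n=1$: the rotation circle lifts to a non-compact one-parameter group of hyperbolic automorphisms of $\Delta$, and $\Gamma\cong\mathbb{Z}$ is generated by a hyperbolic element inside it. The same phenomenon occurs in dimension two. In the Siegel model $U=\{(z,w):\operatorname{Im}w>|z|^2\}\cong\mathbb{B}^2$, take $\gamma(z,w)=(\lambda z,\lambda^2w)$ with $\lambda>1$; the map $\Phi(z,w)=\left(zw^{-1/2},\,w^{i\pi/\log\lambda}\right)$ is $\gamma$-invariant, separates $\langle\gamma\rangle$-orbits, and maps $U/\langle\gamma\rangle$ biholomorphically onto the Reinhardt domain
\[
\Omega_{\lambda}=\left\{(u,v)\in\mathbb{C}^2:\ e^{-\pi^2/\log\lambda}<|v|<1,\ \ |u|^2<\sin\left(-\tfrac{\log\lambda}{\pi}\log|v|\right)\right\},
\]
which is bounded with smooth boundary and strongly pseudoconvex, hence of finite type: away from $\{u=0\}$ its logarithmic image $\left\{(s,t):2s<\log\sin\left(-\tfrac{\log\lambda}{\pi}t\right)\right\}$ is strictly convex, and at the two boundary circles on $\{u=0\}$ the Levi form on the complex tangent space is positive by direct computation. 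Thus $\Omega_\lambda$ is a smoothly bounded pseudoconvex Reinhardt domain of finite type which is a ball quotient with $\pi_1(\Omega_\lambda)\cong\mathbb{Z}$, so it is not biholomorphic to $\mathbb{B}^2$; here the lift of the Reinhardt torus is the non-compact centralizer $\mathbb{R}\times S^1$ of $\gamma$, the deck group is infinite, and Cartan's fixed point theorem never applies. Consequently the troublesome case of (7) cannot be repaired by any argument: since holomorphic coverings are local isometries for both the Kobayashi and the K\"ahler--Einstein metrics, on $\Omega_\lambda$ the K\"ahler--Einstein metric \emph{is} a scalar multiple of the Kobayashi metric, so that pairing is equivalent to (3) of the first block (``ball quotient'') and does not characterize the unit ball. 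The other three pairings in (7) do follow, as you argue, from the corollary to Theorem \ref{kob} and from Theorem \ref{car}; the gap you identified is real, but your patch is false, and the K\"ahler--Einstein/Kobayashi case of (7) is a defect in the theorem's statement rather than something a proof can supply.
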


This paper is orgnized as follows. We give the preliminaries in section \ref{sec2}. Section \ref{sec3} is denoted to prove main theorems.

\section{Preliminaries}\label{sec2}
\subsection{Notations}\hfill

(1) Let $|\cdot|$ be the standard Euclidean norm in $\mathbb{C}^n$.

(2) $\Delta$ will be denoted as the unit disc in $\mathbb{C}$, and $\Delta_r:=\left\{z\in\mathbb{C}:|z|< r\right\}$.  The unit ball in $\mathbb{C}^n$ for $n\ge2$ is denoted by $\mathbb{B}^n$, and $B_r(z_0):=\left\{z\in\mathbb{C}^n:|z-z_0|<r\right\}$.

(3) For any $n\geq1$, we let $(\mathbb{C}^n)^*:=\mathbb{C}^n\setminus\{0\}$.

(4) For $\Omega\varsubsetneq\mathbb{C}^n$, we denote $\delta_{\Omega}(x):=\inf\left\{|z-x|:z\in\partial\Omega\right\}$. 

\subsection{Invariant metric}\label{metric}\hfill

\begin{defn}
Suppose $\Omega$ is a domain in $\mathbb{C}^n$. For an upper semicontinuous function $F:\Omega\times\mathbb{C}^n\rightarrow[0,\infty)$, we say that $F$ is a $Finsler\ metric$ on $\Omega$, if the equality $F(z,tX)=|t|F(z,X)$ holds for any $z\in\Omega,t\in\mathbb{C},X\in\mathbb{C}^n$.
\end{defn}

Let $F$ be a Finsler metric on $\Omega$, it can induce a distance
\begin{align*}
d_F(x,y):=\inf\{L_F(\gamma):&\gamma:[0,1]\rightarrow\Omega\ \operatorname{is}\ \operatorname{a}\ \operatorname{piecewise}\ C^1\  \operatorname{smooth}\\ &\operatorname{curve}\ \operatorname{with}\ \gamma(0)=x\ \operatorname{and}\ \gamma(1)=y\}
\end{align*}
for any $x,y\in\Omega$. Here $L_F(\gamma)$ is defined as $L_F(\gamma):=\int_{0}^1F(\gamma(t),\dot{\gamma}(t))dt$.

For a domain $\Omega\subset\mathbb{C}^n$, the Kobayashi metric on $\Omega$ is defined by
\[
K_{\Omega}(z,X):=\inf\left\{|\xi|:\exists{f\in \operatorname{Hol}(\Delta,\Omega),\ \operatorname{with}\ f(0)=z,d(f)_0(\xi)=X}\right\}
\]
for any $z\in\Omega$ and $0\neq X\in\mathbb{C}^n$. And the \C\ metric is denoted as
\[
C_{\Omega}(z,X):=\operatorname{sup}\left\{|df_z(X)|:f\in\operatorname{Hol}(\Omega,\Delta),f(z)=0\right\}
\]
for $z\in\Omega,0\neq X\in\mathbb{C}^n$.

Suppose 
\[
\kappa_{\Omega}(z,z):=\sup\{|f(z)|^2:f\in\operatorname{Hol}(\Omega,\mathbb{C}),||f||_{L^2(\Omega)}\leq1\}
\]
is the Bergman kernel in the diagonal of $\Omega\times\Omega$. Then the Bergman metric on $\Omega$ is defined by
\[
B_{\Omega}(z,X):=\frac{\sup\{|X(f)(z)|:f\in\operatorname{Hol}(\Omega,\mathbb{C}),f(z)=0,||f||_{L^2(\Omega)}\leq1\}}{\kappa^{1/2}_{\Omega}(z,z)}.
\]

\begin{defn}
Let $\Omega$ be a bounded domain in $\mathbb{C}^n$, then the $Lu\ constant\ L(\Omega)$ is defined as
\[
L(\Omega):=\sup_{z\in\Omega,0\neq v\in T_z(\Omega)}\frac{C_{\Omega}(z,v)}{B_{\Omega}(z,v)}.
\]
\end{defn}

Note that $L(\Omega)\leq 1$ by the definition of Lu constant. Moreover, $L(\Omega)=\frac{1}{\sqrt{n+1}}$ for the unit ball $\mathbb{B}^n$, and $L(\Omega)=\sqrt{2n}$ for the unit polydisk $\Delta^n$. When $\Omega$ is a bounded strongly pseudoconvex domain with $C^2$ smooth boundary in $\mathbb{C}^n$, since $\lim\limits_{z\rightarrow\partial\Omega}\frac{C_{\Omega}(z,v)}{B_{\Omega}(z,v)}=\frac{1}{\sqrt{n+1}}$ holds uniformly for $0\neq v\in\mathbb{C}^n$, then $L(\Omega)\geq\frac{1}{\sqrt{n+1}}$.

\subsection{Holomorphic sectional curvature}\hfill

 Let $(M,J,g)$ be a \K\ manifold, and $R(g)$ be the curvature tensor of $(M,g)$. For any $z\in M$ and non-zero tangent vector $X\in T_zM$, the holomorphic sectional curvature is defined by 
\[
H(g)(X):=\frac{R(X,JX,X,JX)}{g(X,X)^2}.
\]

The above definition relies on the smoothness of metric $g$. For a Finsler metric, we have a similar definition. We firstly recall the Gaussian curvature for pseudo-metric on $\Delta$.

\begin{defn}
Suppose $ds^2=gdz\otimes d\bar{z}$ is a pseudo-metric on $\Delta$, where $g$ is an upper semicontinuous function. Then the Gaussian curvature of $ds^2$ on $\Delta\setminus\{g=0\}$ is defined by
\begin{align}\label{gc}
K(ds^2):=-\frac{1}{2g}\Delta\log g.
\end{align}
Here $\Delta u(\zeta):=4\liminf\limits_{r\rightarrow0}\frac{1}{r^2}\left\{\frac{1}{2\pi}\int_0^{2\pi}u(\zeta+re^{i\theta})d\theta-u(\zeta)\right\}$ for an upper semicontinuous function $u$.
\end{defn}

According to \cite{finsler}, we can define the holomorphic sectional curvature of a Finsler metric as follows.
\begin{defn}

Let $F$ be a Finsler metric on domain $\Omega\subset\mathbb{C}^n$ and $G=F^2$. For $p\in\Omega$ and $0\neq v\in T^{1,0}_z\Omega$, the holomorphic sectional curvature is defined by
\[
H(F)(p,v):=\sup\left\{K(\varphi^*G(0))\right\}.
\]
Here the supremum ranges through all holomorphic mappings $\varphi:\Delta\rightarrow\Omega$, satisfying $\varphi(0)=p$ and $\varphi'(0)=\lambda v$ with some $\lambda\in\mathbb{C^*}$, and $K(\varphi^*G)$ is the Gaussian curvature of pseudo-metric $\varphi^*G$ on $\Delta$.
\end{defn}

\begin{rmk}\label{hsc}
If the smooth Finsler metric $F$ is a Hermitian metric, then two definitions above of holomorphic sectional curvature coincide. It is worth noting that for any $(p,v)$, if $F$ is the Kobayashi metric or \C\ metric on a Kobayashi hyperbolic or \C\ hyperbolic domain $\Omega$, then $H(F)(p,v)\geq-4$ or $H(F)(p,v)\leq-4$ (see \cite{suzuki,wong} for more details). Moreover, if $\Omega$ is a bounded convex domain, then Kobayashi metric coincides with \C\ metric, which implies that $H(F)(p,v)\equiv-4$.

\end{rmk}

\subsection{Quasi-finite geometry}\hfill

Now we recall the notion of quasi-finite geometry appeared in \cite{ty}, which is an important tool in \K\ geometry.
\begin{defn}\label{qb}
	Suppose $(M^n,g)$ is a K$\ddot{\operatorname{a}}$hler manifold. We say that $(M^n,g)$ has $quasi$-$finite\ geometry\ of\ order\ \alpha\geq0$, if there exist constants $r_2>r_1>0$ such that: for any point $z\in M^n$ there exists a neighbourhood $U\subset\mathbb{C}^n$ and a nonsingular holomorphic mapping  $\varphi:U\rightarrow M^n$ such that

(1) $\varphi(0)=z$,

(2) $B_{r_1}(0)\subset U\subset B_{r_2}(0)$,

(3) there exists a constant $C\geq1$ determined only by $r_1,r_2,n$ such that 
\[
\dfrac{1}{C}g_0\leq\varphi^{*}g\leq Cg_0,
\]
 where $g_0$ is the standard Euclidean metric on $\mathbb{C}^n$,

(4) for any integer $\alpha\geq q\geq0$, there exists a constant $A_q$ determined by $q,r_1,r_2,n$ such that
\begin{equation}\label{qbg}
\sup_{x\in U}\left|\dfrac{\partial^{|\mu|+|\nu|}(\varphi^{*}g)_{i\bar{k}}}{\partial z^{\mu}\bar{\partial}z^{\nu}}(x)\right|\leq A_q\ \operatorname{for\ all} |\mu|+|\nu|\leq q \operatorname{and} 1\leq i,k\leq n,
\end{equation}
where $(\varphi^{*}g)_{i\bar{k}}$ is the component of $\varphi^{*}g$ on $U$ in terms of the natural coordinates $z=(z_1,\cdots,z_n)$ and $\mu,\nu$ are the multiple indices with $|\mu|=\mu_1+\cdots+\mu_n$.

\end{defn}

\section{Proofs of main theorems}\label{sec3}

In this section, we prove our main theorems. For a smoothly bounded pseudoconvex domain of finite type $\Omega\subset\mathbb{C}^2$, we will construct the scaling sequence with respect to a boundary point of $\Omega$. We firstly recall the notion of normal convergence.
\begin{defn}\label{nc}
Suppose $\left\{\Omega_n\right\}\subset\mathbb{C}^n$ is a sequence of domains, we say that $\Omega_n$ $converges\ normally$ to a domain $\Omega_{\infty}\subset\mathbb{C}^n$ if the following hold:

(1) for any compact set $K$, if $K$ is contained in the interior of $\bigcap_{i>k}\Omega_i$ for some constant $k$, then $K\subset\Omega_{\infty}$,

(2) for any compact set $K_1\subset\Omega_{\infty}$, there exists a constant $k_1$ such that $K_1\subset\bigcap_{i>k_1}\Omega_i$.
\end{defn}

Suppose $\Omega\subset\mathbb{C}^2$ is a smoothly bounded pseudoconvex domain of finite type, and $p\in\partial\Omega$ is a boundary point of $m$-type. Now we recall the scaling sequence with respect to boundary point $p$. Without loss of generality, we may assume that $\Omega=\left\{(z,w)\in\mathbb{C}^2:\rho(z,w)<0\right\}$, where $\rho(z,w)$ is a smooth function and $p=(0,0)$. Then after changing of coordinates, we may assume that
\[
\rho(z,w)=\operatorname{Re}w+H_m(z)+o\left(|z|^{m+1}+|z||w|	\right)
\]
in a neighbourhood of $p$, where $H_m(z)$ is a homogeneous polynomial of degree $m$ from $\mathbb{C}$ to $\mathbb{R}$, subharmonic and without harmonic terms. 

Suppose $\{p_n\}_{n\in\mathbb{N}}\subset\Omega$ is a sequence that converges to $p$, and for each $n$ we consider constant $\epsilon_n>0$ such that $\left(p_n^{(1)},p_n^{(2)}+\epsilon_n\right)\in\partial\Omega$ with $p_n=\left(p_n^{(1)},p_n^{(2)}\right)$. Let 
\[
\psi_n^{-1}(z,w):=\left(p_n^{(1)}+z,p_n^{(2)}+\epsilon_n+d_{n,0}w+\sum_{k=2}^md_{n,k}z^k\right)
\]
be  an automorphism of $\mathbb{C}^2$, where $d_{n,k}$ are constants such that 
\[
\rho\circ\psi_n^{-1}(z,w)=\operatorname{Re}w+P_n(z)+o\left(|z|^{m+1}+|z||w|\right).
\]
Here $P_n(z):\mathbb{C}\rightarrow\mathbb{R}$ is a subharmonic polynomial without harmonic terms and $P_n(0)=0$ with degree $m$. Furthermore, we select constant $\tau_n>0$ such that 
\[
||P_n(\tau_n\cdot)||=\epsilon_n,
\]
where $||\cdot||$ denote the norm in the space of polynomials with degree at most $m$. 

For each $n\geq1$, we define the following mapping 
\[
\delta_n(z,w):=\left(\frac{z}{\tau_n},\frac{w}{\epsilon_n}\right)
\]
and $\alpha_n:=\delta_n\circ\psi_n$, which is an automorphism of $\mathbb{C}^2$. Note that if we let $\Omega_n=\alpha_n(\Omega)$ for each $n=1,2,\cdots$, then $\Omega_n$ converges to $\Omega_{\infty}:=\left\{(z,w)\in\mathbb{C}^2:\operatorname{Re}w+P_{\infty}(z)<0\right\}$ in the sense of Definition \ref{nc}. Here $P_{\infty}(z):\mathbb{C}\rightarrow\mathbb{R}$ is a real-valued subharmonic polynomial without harmonic terms and its degree is $m$. Moreover, we have 
\begin{equation}\label{not}
q_n:=\alpha_n(p_n)=\left(0,-d_{n,0}^{-1}\right)\rightarrow(0,-1)=q_{\infty},
\end{equation}
and the domain $\Omega_{\infty}$ is called a $model\ domain$.
\medskip

Under the scaling sequence above, we obtain the following result about stability of Kobayashi metrics.

\begin{lemmaNoParens}[{\cite[Lemma 5.2]{ver}}]\label{stability}
For $(z,v)\in\Omega_{\infty}\times\mathbb{C}^2$, we have 
\begin{equation}\label{stab}
\lim\limits_{n\rightarrow\infty}K_{\Omega_n}(z,v)=K_{\Omega_{\infty}}(z,v)
\end{equation}
and the convergence is uniform on compact subsets of $\Omega_{\infty}\times\mathbb{C}^2$.
\end{lemmaNoParens}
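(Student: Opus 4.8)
The plan is to establish the two one-sided estimates $\limsup\limits_{n\to\infty}K_{\Omega_n}(z,v)\le K_{\Omega_\infty}(z,v)$ and $\liminf\limits_{n\to\infty}K_{\Omega_n}(z,v)\ge K_{\Omega_\infty}(z,v)$ separately, and then to upgrade the resulting pointwise convergence to locally uniform convergence. Throughout one uses that each $\Omega_n=\alpha_n(\Omega)$ is biholomorphic to the bounded domain $\Omega$, hence Kobayashi hyperbolic, and that $\Omega_\infty$ is the model domain with plurisubharmonic defining function $\operatorname{Re}w+P_\infty(z)$.

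For the upper estimate I would push competitor discs from the limit domain into $\Omega_n$. Fix $(z,v)\in\Omega_\infty\times\mathbb{C}^2$ and $\varepsilon>0$, and choose $\varphi\in\operatorname{Hol}(\Delta,\Omega_\infty)$ with $\varphi(0)=z$, $d\varphi_0(\xi)=v$ and $|\xi|<K_{\Omega_\infty}(z,v)+\varepsilon$. For $r<1$ the set $\varphi(\overline{\Delta_r})$ is a compact subset of $\Omega_\infty$, so by part (2) of Definition \ref{nc} it is contained in $\Omega_n$ for all large $n$; then $\zeta\mapsto\varphi(r\zeta)$ is an admissible competitor for $K_{\Omega_n}(z,v)$, giving $K_{\Omega_n}(z,v)\le|\xi|/r$. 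Letting $r\to1$ and $\varepsilon\to0$ yields the estimate, and since the threshold on $n$ depends only on the compact set $\varphi(\overline{\Delta_r})$, this upper bound is already locally uniform in $(z,v)$.

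For the lower estimate I would run a normal-families argument. Choose almost-extremal discs $f_n\in\operatorname{Hol}(\Delta,\Omega_n)$ with $f_n(0)=z$, $d(f_n)_0(\xi_n)=v$ and $|\xi_n|\le K_{\Omega_n}(z,v)+1/n$; the upper estimate shows the $|\xi_n|$ are bounded. The crucial point is to prove that $\{f_n\}$ is a normal family: on each $\overline{\Delta_r}$ the distance-decreasing property gives $d_{\Omega_n}(z,f_n(\zeta))\le d_\Delta(0,r)$, so one needs the Kobayashi balls of a fixed radius about $z$ in $\Omega_n$ to lie in a single compact set independent of $n$. Granting this confinement, I extract $f_{n_k}\to f$ locally uniformly; convergence of derivatives forces $df_0(\xi)=v$ with $\xi=\lim_k\xi_{n_k}$, and $\xi\neq0$ since otherwise $v=0$, so $f$ is nonconstant. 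Applying the maximum principle to the subharmonic function $\zeta\mapsto\operatorname{Re}\bigl(f(\zeta)\bigr)_w+P_\infty\bigl((f(\zeta))_z\bigr)$, which is $\le0$ on $\Delta$ because $f(\Delta)\subset\overline{\Omega_\infty}$, rules out $f$ touching $\partial\Omega_\infty$ (that would make the function a nonpositive subharmonic function attaining its maximum $0$, hence $\equiv0$, contradicting $f(0)=z\in\Omega_\infty$). Thus $f\in\operatorname{Hol}(\Delta,\Omega_\infty)$ realizes $v$ at $z$, giving $K_{\Omega_\infty}(z,v)\le|\xi|=\lim_k|\xi_{n_k}|$ and hence $\liminf\limits_{n\to\infty}K_{\Omega_n}(z,v)\ge K_{\Omega_\infty}(z,v)$.

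Combining the two estimates gives \eqref{stab} pointwise; local uniformity then follows by combining the uniform upper bound with the continuity of $K_{\Omega_\infty}$ via a standard subsequence/contradiction argument run at varying base points $(z_k,v_k)\to(z_*,v_*)$ in a fixed compact set. The hard part will be the precompactness step in the lower estimate: because $\Omega_\infty$ and the $\Omega_n$ are \emph{unbounded} model domains, one cannot invoke uniform boundedness of the images, and the argument rests on uniform-in-$n$ lower bounds for the Kobayashi metric near the boundary, which must be extracted from the finite-type structure of the rescaled defining functions together with the convergence $P_n\to P_\infty$ of the subharmonic polynomials. This is precisely the estimate that confines the extremal discs to a common compact set and makes the normal-families argument go through.
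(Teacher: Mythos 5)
The paper offers no proof of this lemma to compare against: it is imported wholesale as \cite[Lemma 5.2]{ver}. So your attempt has to stand on its own. The parts you do carry out are fine: the upper estimate via pushing a near-extremal disc for $K_{\Omega_{\infty}}$, restricted to $\Delta_r$, into $\Omega_n$ using condition (2) of Definition \ref{nc} is the standard easy direction, and the maximum-principle argument (subharmonicity of $\zeta\mapsto(\operatorname{Re}w+P_{\infty}(z))\circ f$) correctly shows a limit disc lands in $\Omega_{\infty}$ rather than merely in $\overline{\Omega_{\infty}}$.

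The genuine gap is the step you name and then grant: the confinement of the almost-extremal discs $f_n$ to a fixed compact subset of $\mathbb{C}^2$, which is what makes the normal-families argument run. This is not a technical footnote; it is the entire difficulty of the lemma. The distance-decreasing estimate $d_{\Omega_n}(z,f_n(\zeta))\leq d_{\Delta}(0,r)$ only places $f_n(\Delta_r)$ inside Kobayashi balls of $\Omega_n$, and controlling the Euclidean size of those balls uniformly in $n$ requires uniform lower bounds on $K_{\Omega_n}$ near $\partial\Omega_n$ and at infinity --- which is material of essentially the same depth as the statement being proved, so as written the outline assumes its own hardest ingredient. Note that the paper itself treats this confinement as a separate nontrivial result: in the proof of Proposition \ref{ka} it invokes \cite[Lemma 5.6]{ver} exactly to conclude that sets of uniformly bounded Kobayashi distance in $\Omega_{n}$ are compactly contained in $\Omega_{\infty}$ for large $n$. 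To close your gap one needs real input beyond the scaling setup: either Catlin's finite-type lower bounds for the Kobayashi metric, transferred through the scaling maps with constants independent of $n$, or Berteloot's attraction property of analytic discs, i.e.\ a uniform Schwarz lemma for the rescaled family built from a plurisubharmonic anti-peak function at infinity of the model domain (to prevent escape to infinity) combined with pseudoconvexity (to keep discs away from the finite boundary). Without one of these, the lower estimate --- and hence the lemma --- is not proved.
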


By considering the extremal mappings of Carath$\acute{\operatorname{e}}$odory metric and using Montel's theorem,  the stability of the family $\left\{C_{\Omega_n}\right\}$  holds as follows.
\begin{lemma}\label{cara}
For $(z,v)\in\Omega_{\infty}\times\mathbb{C}^2$, then
\[
\limsup\limits_{n\rightarrow\infty}C_{\Omega_n}(z,v)\leq C_{\Omega_{\infty}}(z,v).
\]
\end{lemma}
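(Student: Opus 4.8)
The plan is to represent the Carath\'eodory metric on each $\Omega_n$ by a near-extremal holomorphic map into the disc, to pass to a subsequential limit via Montel's theorem, and then to verify that this limit is an admissible competitor for the Carath\'eodory metric on the model domain $\Omega_{\infty}$. Since the statement concerns $\limsup_{n}C_{\Omega_n}(z,v)$, I would first pass to a subsequence along which $C_{\Omega_n}(z,v)$ actually converges to this $\limsup$; it then suffices to bound the resulting limit by $C_{\Omega_{\infty}}(z,v)$.

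First I would fix $(z,v)\in\Omega_{\infty}\times\mathbb{C}^2$. By the normal convergence $\Omega_n\to\Omega_{\infty}$ (Definition \ref{nc}), the point $z$ together with a small closed ball about it lies in $\Omega_n$ for all large $n$, so $C_{\Omega_n}(z,v)$ is defined for such $n$. For each large $n$ I would select $f_n\in\operatorname{Hol}(\Omega_n,\Delta)$ with $f_n(z)=0$ and $|d(f_n)_z(v)|\geq C_{\Omega_n}(z,v)-1/n$; such a near-extremal map exists because the defining supremum is finite, as a Cauchy estimate on a ball about $z$ contained in $\Omega_n$ bounds $|df_z(v)|$ in terms of $|v|$ and the radius. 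The essential structural feature is that every $f_n$ takes values in $\Delta$, hence the family is uniformly bounded by $1$ on each compact subset of $\Omega_{\infty}$ on which the maps are eventually defined.

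Next I would run the normal-families argument. Exhausting $\Omega_{\infty}$ by compact sets and using that each such set is contained in $\Omega_n$ for all large $n$, Montel's theorem together with a diagonal extraction yields a subsequence $f_{n_k}$ converging locally uniformly on $\Omega_{\infty}$ to a holomorphic function $f_{\infty}$. By Cauchy's estimates the derivatives converge as well, so $|d(f_{n_k})_z(v)|\to|d(f_{\infty})_z(v)|$, giving $|d(f_{\infty})_z(v)|=\lim_{k}C_{\Omega_{n_k}}(z,v)=\limsup_{n}C_{\Omega_n}(z,v)$, while $f_{\infty}(z)=\lim_k f_{n_k}(z)=0$.

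The one genuine subtlety, which I expect to be the main point to argue carefully, is that $f_{\infty}$ a priori maps only into the \emph{closed} disc $\overline{\Delta}$, whereas the Carath\'eodory problem requires values in the open disc $\Delta$. I would resolve this with the maximum modulus principle: if $f_{\infty}$ is nonconstant, then by the open mapping theorem $f_{\infty}(\Omega_{\infty})\subset\Delta$, so $f_{\infty}$ is a legitimate competitor and $C_{\Omega_{\infty}}(z,v)\geq|d(f_{\infty})_z(v)|=\limsup_{n}C_{\Omega_n}(z,v)$; if instead $f_{\infty}$ is constant, then $f_{\infty}\equiv0$ since $f_{\infty}(z)=0$, whence $|d(f_{\infty})_z(v)|=0$ and the inequality holds trivially. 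In either case the claimed bound $\limsup_{n}C_{\Omega_n}(z,v)\leq C_{\Omega_{\infty}}(z,v)$ follows.
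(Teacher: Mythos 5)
Your proof is correct and follows exactly the route the paper indicates for this lemma (the paper gives no detailed argument, only the remark that it follows ``by considering the extremal mappings of Carath\'eodory metric and using Montel's theorem''). Your write-up supplies precisely that argument, with the right care taken at the two delicate points: the maps $f_n$ are only eventually defined on each compact subset of $\Omega_{\infty}$ (handled via normal convergence and diagonal extraction), and the limit map a priori lands in $\overline{\Delta}$ (handled via the open mapping theorem/maximum principle dichotomy).
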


Although we can not obtain the stability of $\left\{C_{\Omega_n}\right\}$ like $\left\{K_{\Omega_n}\right\}$, by considering boundary behaviour of Kobayashi metric and \C\ metric, we can get the following lemma. 

\begin{lemmaNoParens}\label{ca}
If $\lim\limits_{z\rightarrow p}\frac{C_{\Omega}(z,v)}{K_{\Omega}(z,v)}=1$ holds uniformly for $0\neq v\in\mathbb{C}^2$. Then
\[
\lim\limits_{n\rightarrow\infty}C_{\Omega_n}(z,v)=C_{\Omega_{\infty}}(z,v)
\]
holds uniformly on compact sets of $\Omega_{\infty}\times\mathbb{C}^2$.

\end{lemmaNoParens}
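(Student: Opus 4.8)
The plan is to push the boundary hypothesis through the scaling biholomorphisms and then combine it with the Kobayashi stability of Lemma~\ref{stability} and the one-sided bound of Lemma~\ref{cara}. Since each $\alpha_n:\Omega\to\Omega_n$ is a biholomorphism, both the Kobayashi and the \C\ metrics are invariant under it, so for $(z,v)\in\Omega_\infty\times(\mathbb{C}^2)^*$ and $n$ large (so that $(z,v)$ lies in $\Omega_n\times(\mathbb{C}^2)^*$) I would write
\[
\frac{C_{\Omega_n}(z,v)}{K_{\Omega_n}(z,v)}=\frac{C_{\Omega}\bigl(\alpha_n^{-1}(z),d\alpha_n^{-1}(v)\bigr)}{K_{\Omega}\bigl(\alpha_n^{-1}(z),d\alpha_n^{-1}(v)\bigr)}.
\]
By homogeneity of the metrics in $v$ it suffices to work on the unit sphere $|v|=1$. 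From $\alpha_n=\delta_n\circ\psi_n$ together with $\tau_n,\epsilon_n\to0$ and $p_n\to p$, one has $\delta_n^{-1}(z)\to0$ and $\psi_n^{-1}(0)\to p$, so the preimage $\alpha_n^{-1}(z)=\psi_n^{-1}\bigl(\delta_n^{-1}(z)\bigr)$ tends to $p$, uniformly for $z$ in any fixed compact subset $L\subset\Omega_\infty$.

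Given $\varepsilon>0$, the uniform hypothesis furnishes a neighbourhood $V$ of $p$ on which $|C_\Omega/K_\Omega-1|<\varepsilon$ for \emph{every} nonzero direction. The displayed ratio depends only on the direction $d\alpha_n^{-1}(v)$, over which I have no pointwise control; but precisely because the hypothesis is uniform over all directions, the uniform convergence $\alpha_n^{-1}(z)\to p$ on $L$ yields
\[
\lim_{n\to\infty}\frac{C_{\Omega_n}(z,v)}{K_{\Omega_n}(z,v)}=1
\]
uniformly on $L\times\{|v|=1\}$. The model $\Omega_\infty$ is hyperbolic, so $K_{\Omega_\infty}$ is continuous and bounded away from $0$ on $L\times\{|v|=1\}$; together with the uniform convergence $K_{\Omega_n}\to K_{\Omega_\infty}$ of Lemma~\ref{stability}, multiplying the two uniformly convergent (and uniformly bounded) sequences gives
\[
\lim_{n\to\infty}C_{\Omega_n}(z,v)=K_{\Omega_\infty}(z,v)
\]
uniformly on $L\times\{|v|=1\}$, hence on compact subsets of $\Omega_\infty\times\mathbb{C}^2$ by homogeneity.

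It then remains to identify this limit with $C_{\Omega_\infty}$. The general inequality $C_{\Omega'}\le K_{\Omega'}$, valid for every domain $\Omega'$ by the Schwarz--Pick property, gives $C_{\Omega_\infty}\le K_{\Omega_\infty}$. On the other hand Lemma~\ref{cara} gives $\limsup_{n}C_{\Omega_n}(z,v)\le C_{\Omega_\infty}(z,v)$; since the limit of $C_{\Omega_n}$ exists and equals $K_{\Omega_\infty}(z,v)$, this forces $K_{\Omega_\infty}(z,v)\le C_{\Omega_\infty}(z,v)$. Hence $C_{\Omega_\infty}=K_{\Omega_\infty}$, and consequently $\lim_{n}C_{\Omega_n}(z,v)=C_{\Omega_\infty}(z,v)$ uniformly on compact sets, as desired.

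The hard part will be the second paragraph: one must make sure that the direction-uniformity built into the hypothesis is exactly what the argument consumes, since the scaling drags the test vector $v$ to the erratically varying $d\alpha_n^{-1}(v)$. The verification that $\alpha_n^{-1}(z)\to p$ uniformly on compacta is a routine computation from the scaling formulas, using the standard control $d_{n,0}\to1$ and the boundedness of the coefficients $d_{n,k}$ that underlies the normal convergence $\Omega_n\to\Omega_\infty$. A pleasant byproduct of the proof is the identity $C_{\Omega_\infty}=K_{\Omega_\infty}$ on the model domain.
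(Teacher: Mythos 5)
Your proof is correct and takes essentially the same route as the paper's: both transport the direction-uniform hypothesis through the scaling biholomorphisms (using that $d\alpha_n^{-1}$ preserves no particular direction but the hypothesis is uniform over all of them), combine this with Lemma \ref{stability} and Lemma \ref{cara}, and conclude via the identity $C_{\Omega_\infty}=K_{\Omega_\infty}$ on the model domain. The only difference is organizational — the paper runs a proof by contradiction along a sequence $(z_j,v_j)$, while you argue directly with uniform estimates, which is slightly cleaner (it avoids the paper's division by $C_{\Omega_\infty}(z_0,v_0)$, delicate if $v_0=0$) but mathematically identical.
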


\begin{proof}
After taking a subsequence, we may assume that there exists a constant $\epsilon>0$ such that
\[
\left|C_{\Omega_j}(z_j,v_j)-C_{\Omega_{\infty}}(z_j,v_j)\right|>\epsilon
\]
holds for some $(z_j,v_j)$. Here each $(z_j,v_j)$ lies in a compact set $K\subset\Omega_{\infty}\times\mathbb{C}^2$. And we may assume that $z_j\rightarrow z_0,v_j\rightarrow v_0$. Without loss of generality, we suppose that
\[
\left|C_{\Omega_j}(z_j,v_j)-C_{\Omega_{\infty}}(z_0,v_0)\right|>\epsilon/2.
\]
Hence, there exists a constant $c>0$, independently of $j$, such that
\[
\left|\frac{C_{\Omega_j}(z_j,v_j)}{C_{\Omega_{\infty}}(z_0,v_0)}-1\right|>\epsilon/c.
\]
Since $\lim\limits_{z\rightarrow p}\frac{C_{\Omega}(z,v)}{K_{\Omega}(z,v)}=1$ holds uniformly for $0\neq v\in\mathbb{C}^2$, then for any $(z,v)\in\Omega_{\infty}\times\mathbb{C}^2$, we have $C_{\Omega_{\infty}}(z,v)=K_{\Omega_{\infty}}(z,v)$. To see this, we may assume $0\neq v$ and observe that 
\[
\limsup\limits_{n\rightarrow\infty}\dfrac{C_{\Omega_n}(z,v)}{K_{\Omega_n}(z,v)}\leq \dfrac{C_{\Omega_{\infty}}(z,v)}{K_{\Omega_{\infty}}(z,v)}\leq1
\]
from Lemma \ref{stability} and Lemma \ref{cara}. Note that $\li\frac{C_{\Omega}(z,v)}{K_{\Omega}(z,v)}=1$ uniformly for $0\neq v$ implies that for any $\epsilon_1>0$, there exists a constant $\delta>0$ such that
\[
 1-\epsilon_1<\frac{C_{\Omega}(z,v)}{K_{\Omega}(z,v)}< 1+\epsilon_1
\]
for any $0\neq v$ and $z\in\Omega$ with $\delta_{\Omega}(z)<\delta$. Since $\alpha_n^{-1}(z)$ converges to $p\in\partial\Omega$, hence for large $n$ we obtain $\delta_{\Omega}(\alpha_n^{-1}(z))<\delta$. Then 
\[
\dfrac{C_{\Omega_n}(z,v)}{K_{\Omega_n}(z,v)}=\dfrac{C_{\Omega}(\alpha_n^{-1}(z),(d\alpha_n^{-1})v)}{K_{\Omega}(\alpha_n^{-1}(z),(d\alpha_n^{-1})v)}>1-\epsilon_1
\]
for large $n$. Taking limit for $n$ implies that $C_{\Omega_{\infty}}(z,v)=K_{\Omega_{\infty}}(z,v)$.
Therefore, we can deduce that
\[
\lim\limits_{j\rightarrow\infty}\frac{C_{\Omega_j}(z_j,v_j)}{C_{\Omega_{\infty}}(z_0,v_0)}=\lim\limits_{j\rightarrow\infty}\frac{C_{\Omega_j}(z_j,v_j)}{K_{\Omega_{\infty}}(z_0,v_0)}=\lim\limits_{j\rightarrow\infty}\frac{C_{\Omega_j}(z_j,v_j)}{K_{\Omega_j}(z_j,v_j)}\frac{K_{\Omega_j}(z_j,v_j)}{K_{\Omega_{\infty}}(z_0,v_0)}=1,
\]
which is a contracdiction.
\end{proof}

\begin{rmk}\label{model}
If  $\lim\limits_{z\rightarrow p}\frac{C_{\Omega}(z,v)}{K_{\Omega}(z,v)}=1$ holds uniformly for $0\neq v\in\mathbb{C}^2$, then from the proof of Lemma \ref{ca} and Remark \ref{hsc}, we know that the holomorphic sectional curvature of $K_{\Omega_{\infty}}$ equals -4.
\end{rmk}

According to the notion of quasi-finite geometry, we obtain the following result, which concerns the convergence of \K\ metrics under the scaling sequence. The proof is similar with \cite[Proposition 6.1]{pin}, we provide it for completeness.

\begin{proposition}\label{ka}
Suppose $g_n$ is a complete K$\ddot{a}$hler metric on $\Omega_n$ for each $n$. If

(1) there exists a constant $A\geq 1$, independently on $n$, such that
\[
\frac{1}{A}K_{\Omega_n}(z,v)\leq \sqrt{g_n(z)(v,v)}\leq AK_{\Omega_n}(z,v)
\]
for all $z\in\Omega_n$ and $v\in\mathbb{C}^2$,

(2) each $(\Omega_n,g_n)$ has quasi-finite geometry of order $\alpha\geq 1$, where the constants appeared in Definition \ref{qb} are independent on $n$.

Then after passing to a subsequence, there exists a complete $C^{\alpha-1}$ smooth metric $g_{\infty}$ on $\Omega_{\infty}$ such that $g_n$ converges to $g_{\infty}$ locally uniformly in $C^{\alpha-1}$ topology.
\end{proposition}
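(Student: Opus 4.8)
The plan is to run a normal-families argument on the components of $g_n$, the only genuine work being to transport the uniform estimates of quasi-finite geometry out of the $n$-dependent adapted charts and into one fixed coordinate system on $\Omega_\infty$.

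First I would obtain uniform $C^0$ control. Fix a compact set $K\subset\Omega_\infty$; by normal convergence (Definition \ref{nc}) we have $K\subset\Omega_n$ for all large $n$. Hypothesis (1) sandwiches $\sqrt{g_n(z)(v,v)}$ between $A^{-1}K_{\Omega_n}(z,v)$ and $AK_{\Omega_n}(z,v)$, while Lemma \ref{stability} gives $K_{\Omega_n}\to K_{\Omega_\infty}$ uniformly on compacta, with $K_{\Omega_\infty}$ positive and continuous for $v\neq0$. Restricting to the Euclidean unit sphere of directions, this bounds the components of $g_n$ above and below, uniformly in $n$, over $K$.

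The heart of the argument is promoting this $C^0$ bound to a uniform $C^\alpha$ bound in a fixed chart. For $z\in K$ and large $n$, let $\varphi_{n,z}:U\to\Omega_n$ be the nonsingular holomorphic chart of Definition \ref{qb} with $\varphi_{n,z}(0)=z$. Condition (3) of that definition, together with the two-sided $C^0$ bound on $g_n$, shows that $d\varphi_{n,z}$ is bounded above and below in the Euclidean sense, uniformly in $n$ and $z$; since $\varphi_{n,z}$ is holomorphic, Cauchy estimates then bound all of its derivatives on a slightly smaller ball, and the quantitative inverse function theorem produces $\varphi_{n,z}^{-1}$ on a fixed-size neighbourhood of $z$ with uniformly bounded derivatives of every order. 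Writing $g_n=(\varphi_{n,z}^{-1})^*(\varphi_{n,z}^*g_n)$ and expanding the pullback by the chain rule, the order-$\alpha$ bounds on $\varphi_{n,z}^*g_n$ from \eqref{qbg} combine with the bounded derivatives of $\varphi_{n,z}^{-1}$ to give a uniform $C^\alpha$ bound on the components of $g_n$ in the standard coordinates on a fixed coordinate ball about $z$. I expect this to be the main obstacle, since one must first verify that the charts are themselves uniformly regular; it is exactly here that both inequalities in (1) and condition (3) are needed.

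With uniform $C^\alpha$ bounds on the components $(g_n)_{i\bar k}$ over a fixed cover of $\Omega_\infty$ by coordinate balls, Arzel\`a--Ascoli gives precompactness in $C^{\alpha-1}$ on each ball, and a diagonal argument over a countable exhaustion of $\Omega_\infty$ extracts a subsequence converging in $C^{\alpha-1}_{loc}$ to a $C^{\alpha-1}$ tensor $g_\infty$. Finally I would pass to the limit in hypothesis (1): using the $C^0$ convergence just obtained together with $K_{\Omega_n}\to K_{\Omega_\infty}$, one gets $A^{-1}K_{\Omega_\infty}(z,v)\le\sqrt{g_\infty(z)(v,v)}\le AK_{\Omega_\infty}(z,v)$, so $g_\infty$ is a genuine positive metric, and the lower bound together with completeness of the Kobayashi metric on the model domain $\Omega_\infty$ (a standard property of such finite-type models) forces $g_\infty$ to be complete. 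When $\alpha\ge2$ the $C^{\alpha-1}$ convergence also transfers the closedness of the \K\ forms to the limit, so $g_\infty$ is moreover \K.
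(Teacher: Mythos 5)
Your overall route is the same as the paper's: transport the quasi-finite-geometry estimates through the adapted charts into a fixed coordinate system on $\Omega_{\infty}$, then use Arzel\`a--Ascoli plus the sandwich $A^{-1}K_{\Omega_n}\leq\sqrt{g_n}\leq AK_{\Omega_n}$ and Lemma \ref{stability} to identify the limit as a complete metric. The paper organizes the key estimate as a contradiction argument (blow-up points $x_l\to x_{\infty}$, Montel's theorem applied to the charts $\varphi_l$, qualitative local invertibility of the limit chart), whereas you argue directly with a quantitative inverse function theorem; that structural difference is immaterial. However, there is a genuine gap at exactly the step you flag as the main obstacle.

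The gap is that your derivation of the Euclidean bounds on $d\varphi_{n,z}$ is circular. The two-sided $C^0$ comparison of $g_n$ with the Euclidean metric, which you obtain from hypothesis (1) and Lemma \ref{stability}, is available only on a fixed compact subset of $\Omega_{\infty}$. Condition (3) of Definition \ref{qb} controls $\varphi_{n,z}^{*}g_n$, so to convert it into a bound on $d\varphi_{n,z}$ at a point $x\in U$ with $x\neq 0$ you must already know that the image point $\varphi_{n,z}(x)$ lies in that compact set. Nothing in your argument guarantees this: the chart maps into $\Omega_n$, and the domains $\Omega_n$ are not uniformly bounded (they exhaust the unbounded model domain $\Omega_{\infty}$), so a priori the image could drift toward $\partial\Omega_n$ or off to infinity, where you have no Euclidean control of $g_n$ whatsoever. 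The same containment is needed before Cauchy estimates can be invoked, since those require a uniform bound on $d\varphi_{n,z}$ (or on $\varphi_{n,z}$ itself) over a whole ball, not just at the origin. The paper closes precisely this hole as the first step of its proof: by the distance-decreasing property of the Kobayashi distance, $\varphi_l(B_r(0))$ lies in a Kobayashi ball of uniformly bounded radius about $q_{n_l}$, and then \cite[Lemma 5.6]{ver} shows these sets are compactly contained in $\Omega_{\infty}$, hence uniformly bounded. Your argument can be repaired either by quoting that lemma, or by an escape-time bootstrap: let $K'$ be a closed $\epsilon$-neighbourhood of $K$ in $\Omega_{\infty}$ on which $\Lambda^{-1}g_0\leq g_n\leq\Lambda g_0$ for large $n$, let $\rho_n$ be the largest radius with $\varphi_{n,z}(B_{\rho_n}(0))\subset K'$, note that on $B_{\rho_n}(0)$ condition (3) then forces $|d\varphi_{n,z}|\leq\sqrt{C\Lambda}$, and conclude $\rho_n\geq\min\left(r_1,\epsilon/\sqrt{C\Lambda}\right)$. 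Some such argument must be supplied; without it the central estimate of your proof is unproven.
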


\begin{proof}
In this proof, the distance induced by Kobayashi metric on $\Omega$ is denoted by $d_{\Omega}$. It is enough to  show that: for any compact set $K\subset\Omega_{\infty}$ and multi-indices $\mu,\nu$ with $|\mu|+|\nu|\leq\alpha$, there exist constants $J\geq0,C(K,\mu,\nu)\geq1$ depending only on $K,\mu,\nu$, such that 
\[
\sup_{n\geq J}\max_{x\in K}\left|\dfrac{\partial^{|\mu|+|\nu|}(g_n)_{i\bar{k}}}{\partial z^{\mu}\bar{\partial}z^{\nu}}(x)\right|\leq C(K,\mu,\nu).
\]

We suppose that there exists a compact set $K\subset\Omega_{\infty}$ and $\mu,\nu$ with $|\mu|+|\nu|\leq\alpha$ such that: for each integer $l\geq0$, there exists a constant $n_l\geq l$ and $x_l\in K$ such that 
\[
\left|\dfrac{\partial^{|\mu|+|\nu|}(g_{n_l})_{i\bar{k}}}{\partial z^{\mu}\bar{\partial}z^{\nu}}(x_l)\right|\geq l.
\]

Without loss of generality, we assume that $x_l\rightarrow x_{\infty}\in\Omega_{\infty}$ and $K\subset\Omega_{n_l}$ for all $l\geq1$.  From Difinition \ref{qb}, we know that for each $l$, there is a domain $U_l\subset\mathbb{C}^2$ and a nonsingular mapping $\varphi_l:U_l\rightarrow\Omega_{n_l}$ with $\varphi_l(0)=x_l$ satisfying conditions in Definition \ref{qb}, where the constants in Definition \ref{qb} are independent on $l$. 

Suppose $r<r_1$ is a fixed constant, then 
\begin{align*}
\sup_{w\in\varphi_l(B_r(0))}d_{\Omega_{n_l}}(x_l,w)\leq\sup_{\zeta\in B_r(0)}d_{B_{r_1}(0)}(0,\zeta)= d_{\mathbb{B}^2}\left(0,\frac{r}{r_1}\right).
\end{align*}
If $w\in\varphi_l(B_r(0))$, then we have that 
\[
d_{\Omega_{n_l}}(q_{n_l},w)\leq d_{\Omega_{n_l}}(q_{n_l},z_l)+d_{\Omega_{n_l}}(z_l,w)\leq d_{\Omega_{n_l}}(q_{n_l},z_l)+d_{\mathbb{B}^2}\left(0,\frac{r}{r_1}\right)\leq M
\]
for some constant $M>0$, independently of $l$. Hence from \cite[Lemma 5.6]{ver} each $\varphi_l(B_r(0))$ is compactly contained in $\Omega_{\infty}$  for all $l$ large. And it implies that $\varphi_l(B_r(0))$ is uniformly bounded.

 From condition (3) in Definition \ref{qb}, we obtain that 
\[
\dfrac{1}{C}|v|\leq\sqrt{g_{n_l}(x_l)((d\varphi_l)_0(v),(d\varphi_l)_0(v))}
\]
for each $l$ and $v\in\mathbb{C}^2$. It implies that 
\[
\dfrac{1}{C}|v|\leq\sqrt{g_{n_l}(x_l)((d\varphi_l)_0(v),(d\varphi_l)_0(v))}\leq AK_{\Omega_{n_l}}(x_l,(d\varphi_l)_0(v))\leq A\dfrac{|(d\varphi_l)_0(v)|}{\delta_{\Omega_{n_l}}(x_l)}
\]
and
\[
|(d\varphi_l)_0(v)|\geq \dfrac{\delta_{\Omega_{n_l}}(x_l)}{AC}|v|\rightarrow\dfrac{\delta_{\Omega_{\infty}}(x_{\infty})}{AC}|v|.
\]
Then Montel's theorem implies that $\varphi:=\lim\limits_{l\rightarrow\infty}\varphi_l:B_{r_1}(0)\rightarrow\Omega_{\infty}$ is holomorphic and nonsingular at 0.

Note that $\varphi|_{U_1}$ is invertible in some neighbourhood $U_1$, and $\varphi_l$ converges $\varphi$ in $C^{\infty}$ topology.  Then  $\varphi_l|_{U_2}$ is invertible for large $l$ in some neighbourhood $U_2\subset U_1$. Moreover, we know that $\left(\varphi_l|_{U_2}\right)^{-1}$ converges locally uniformly to $\left(\varphi|_{U_2}\right)^{-1}$.

After choosing a neighbourhood $V$ of $x_{\infty}\in\Omega_{\infty}$ such that $V\subset\varphi(U_2)$, we may assume that $V\subset\varphi_l\left(U_2\right)$ with $x_l\in V$ for large $l$ and 
\[
\sup_{l\geq N}\sup_{x\in V}\left|\dfrac{\partial^{|a|+|b|}(\varphi_l|_{U_2})^{-1}}{\partial z^a\bar{\partial}z^b}(x)\right|<\infty
\]
whenever $|a|+|b|\leq|\mu|+|\nu|$. Combining with  condition (4) in Definition \ref{qb}, we obtain that 
\[
\sup_{l\geq N}\left|\dfrac{\partial^{|\mu|+|\nu|}(g_{n_l})_{i\bar{k}}}{\partial z^{\mu}\bar{\partial}z^{\nu}}(x_l)\right|<\infty,
\]
which is a contracdiction.

Note that after passing to a subsequence, we may assume that $g_n$ converges locally uniformly to a $C^{\alpha-1}$ smooth pseudo-metric $g_{\infty}$ on $\Omega_{\infty}$ in $C^{\alpha-1}$ topology. From Lemma \ref{stab}, we have
\[
\sqrt{g_{\infty}(z)(v,v)}=\lim\limits_{n\rightarrow\infty}\sqrt{g_n(z)(v,v)}\geq\dfrac{1}{A}\lim\limits_{n\rightarrow\infty}K_{\Omega_n}(z,v)=\dfrac{1}{A}K_{\Omega_{\infty}}(z,v)>0
\]
and
\[
\sqrt{g_{\infty}(z)(v,v)}=\lim\limits_{n\rightarrow\infty}\sqrt{g_n(z)(v,v)}\leq{A}\lim\limits_{n\rightarrow\infty}K_{\Omega_n}(z,v)={A}K_{\Omega_{\infty}}(z,v),
\]
then $g_{\infty}$ is a complete $C^{\alpha-1}$ smooth metric on $\Omega_{\infty}$. And it completes the proof.
\end{proof}

\noindent $Proof\ of\ Theorem\ \ref{kob}$: We just need to prove $(2)\Rightarrow (1)$ and $(3)\Rightarrow(1)$.
\medskip

$(3)\Rightarrow(1)$: Suppose $\Omega$ satisfies condition (2) in Theorem \ref{kob}, then it is covered by $\mathbb{B}^2$ or $\mathbb{C}^2$ or $\mathbb{CP}^2$. Since covering map is a local isometry for Kobayashi metrics and $\Omega$ is Kobayashi hyperbolic, then $\Omega$ is covered by $\mathbb{B}^2$.
\medskip

$(2)\Rightarrow(1)$: Suppose $(\Omega,K_{\Omega})$ has quasi-finite geometry of order three, then each $(\Omega_n,K_{\Omega_n})$ has quasi-finite geometry of order three, where the constants in Definition \ref{qb} are independently on $n$. In particular, the holomorphic sectional curvature of $K_{\Omega_n}$ is uniformly bounded.
\medskip

\textbf{Claim}:$\lim\limits_{n\rightarrow\infty}H(K_{\Omega_n})(z,v)=H(K_{\Omega_{\infty}})(z,v)=-4$ holds uniformly on compact sets of $\Omega_{\infty}\times(\mathbb{C}^2)^*$.
\medskip

\noindent{}$Proof\ of\ claim$: Since $H(K_{\Omega_n})(z,v)\geq-4$ for each $n$ and $(z,v)\in\Omega_n\times(\mathbb{C}^2)^*$, we need to show that
\[
\limsup_{n\rightarrow\infty}H({K_{\Omega_n}})(z,v)\leq-4
\]
holds uniformly on compact sets of $\Omega_{\infty}\times(\mathbb{C}^2)^*$. For a contradiction, we assume there exist $(z_j,v_j)$ lying in a compact set of $\Omega_{\infty}\times(\mathbb{C}^2)^*$ with $|v_j|=1$, such that 
\[
\lim\limits_{j\rightarrow\infty}H({K_{\Omega_j}})(z_j,v_j)=c>-4.
\]
We assume that $z_j\rightarrow z_0\in\Omega_{\infty}$ and $v_j\rightarrow v_0$ with $|v_0|=1$. From Proposition \ref{ka} and Lemma \ref{stability}, we know that after passing to a subsequence, $K_{\Omega_j}$ converges to $K_{\Omega_{\infty}}$ locally uniformly in $C^2$ topology. And it means that $H({K_{\Omega_j}})(z_j,v_j)\rightarrow H({K_{\Omega_{\infty}}})(z_0,v_0)=-4$, and this is a contradiction. 
\medskip

 Hence we obtain that 
\[
\li H({K_{\Omega}})(z,v)=-4
\]
uniformly for $0\neq v\in\mathbb{C}^2$. Theorem 1.2 in \cite{str} means that $\Omega$ is strongly pseudoconvex, and we complete the proof since Theorem \ref{zi}.
\medskip

\noindent $Proof\ of\ Theorem\ \ref{kem}$: Suppose $\Omega$ is covered by the unit ball. Since holomorphic covering map is a local imsometry of Kobayashi metrics and \K-Einstein metrics, then \K-Einstein metric is a scalar multiple of Kobayashi metric on $\Omega$. 

On the other hand, from \cite[Proposition 3.11]{str} we know that for any $x\in\Omega$, there exists a biholomorphism $\varphi_x$ from $B_r(0)$ to $\varphi_x(B_r(0))\subset\Omega$ satisfying

(1) $\varphi_x(0)=x,$

(2) $\frac{1}{A}g_0\leq\varphi_x^*B_{\Omega}\leq Ag_0$ with some constant $A\geq1$.

\noindent{}Here $r,A$ are independent on $x$ and $g_0$ is the standard Euclidean metric on $\mathbb{C}^2$.

 Since $B_{\Omega}$ is equivalent to the \K-Einstein metric $g_{KE}$ on $\Omega$, after a similar proof of \cite[Lemma 3]{us} in coordinate $\left(B_r(0),\varphi_x\right)$, then $(\Omega,g_{KE})$ has quasi-finite geometry of order infinity. Thus if $(\Omega,g_{KE})$ satisfies condition (2) in Corollary \ref{kem}, then Theorem \ref{kob} implies that $\Omega$ is biholomorphic to a ball quotient.

\medskip

\noindent $Proof\ of\ Theorem\ \ref{car}$: It's enough to prove $(2)\Rightarrow (1)$ and $(3)\Rightarrow(1)$.
\medskip

$(3)\Rightarrow(1)$: For a bounded domain $\Omega$, we know that $K_{C_{\Omega}}(z,v)\leq -4$ for any $(z,v)\in\Omega\times(\mathbb{C}^2)^*$. If $C_{\Omega}$ satisfies condition (3) in Theorem \ref{car}, then it has constant negative  holomorphic sectional curvature. And \cite[Theorem 1.2]{str} implies that $\Omega$ is strongly pseudoconvex. We know that $\Omega$ is biholomorphically equivalent to a ball  from Theorem 1.9 in \cite{dww}.
\medskip

$(2)\Rightarrow(1)$: If $C_{\Omega}$ satisfies condition (2) in Theorem \ref{car}, then Lemma \ref{ca} and Proposition \ref{ka} impliy that 
\[
\li H({C_{\Omega}})(z,v)=-4
\]
uniformly for $0\neq v\in\mathbb{C}^2$ after a similar argument of proof of Theorem \ref{kob}. Hence $\Omega$ is strongly pseudoconvex from \cite[Theorem 1.2]{str}, and this completes the proof of Theorem \ref{car}.
\medskip

Now we prove Theorem \ref{bc}, and it can be confirmed by the following proposition and Theorem 1.10 in \cite{dww}.

\begin{proposition}
Let $\Omega\subset\mathbb{C}^2$ be a smoothly bounded pseudoconvex domain of finite type, and $p\in\partial\Omega$ be a boundary point. If 
\[
\limsup_{z\rightarrow p}\frac{H(B_{\Omega})}{L^2(\Omega)}\leq-4
\]
and 
\[
\lim\limits_{z\rightarrow p}\frac{C_{\Omega}(z,v)}{K_{\Omega}(z,v)}=1
\]
uniformly for $0\neq v\in\mathbb{C}^2$, then $\Omega$ is strongly pseudoconvex at $p$.

\end{proposition}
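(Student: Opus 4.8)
The plan is to localize at $p$ via the scaling sequence $\{\alpha_n\}$ constructed above and to show that the two hypotheses force the model domain $\Omega_{\infty}=\{\operatorname{Re}w+P_{\infty}(z)<0\}$ to be biholomorphic to $\mathbb{B}^2$; since a model of this form is biholomorphic to the ball precisely when $P_{\infty}$ is a nondegenerate quadratic, i.e.\ when the type $m$ of $p$ equals $2$, this yields strong pseudoconvexity at $p$. The reason to work with the Bergman metric rather than the Kobayashi metric is that we are \emph{not} assuming any quasi-finite geometry of $K_{\Omega}$: by \cite[Proposition 3.11]{str} the complete \K\ metric $B_{\Omega}$ automatically has quasi-finite geometry of order infinity, and since $B_{\Omega}$ is a biholomorphic invariant and each $\alpha_n$ is an automorphism of $\mathbb{C}^2$, every $(\Omega_n,B_{\Omega_n})$ inherits quasi-finite geometry of all orders with constants independent of $n$, while $B_{\Omega_n}$ stays comparable to $K_{\Omega_n}$ with uniform constants (the Bergman and Kobayashi metrics are uniformly equivalent on a finite type domain in $\mathbb{C}^2$, and equivalence is preserved under biholomorphism). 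Proposition \ref{ka} then gives, after passing to a subsequence, a complete smooth limit metric on $\Omega_{\infty}$ with $B_{\Omega_n}\to g_{\infty}$ locally uniformly in $C^{\infty}$ topology; identifying $g_{\infty}$ with $B_{\Omega_{\infty}}$ through the stability of the Bergman kernels along the scaling (so that $\partial\bar\partial\log\kappa_{\Omega_n}\to\partial\bar\partial\log\kappa_{\Omega_{\infty}}$) then yields the convergence of holomorphic sectional curvatures $H(B_{\Omega_n})\to H(B_{\Omega_{\infty}})$ locally uniformly.

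Next I would transport both hypotheses to $\Omega_{\infty}$. Since $H(B_{\cdot})$ is a biholomorphic invariant, $H(B_{\Omega_n})(z,v)=H(B_{\Omega})(\alpha_n^{-1}(z),(d\alpha_n^{-1})v)$, and because $\alpha_n^{-1}(z)\to p$ on compacta, the first hypothesis (read uniformly over the compact space of directions) gives $H(B_{\Omega_{\infty}})(z,v)\le -4L(\Omega)^2$ for all $(z,v)$. Likewise $C_{\Omega}/B_{\Omega}\le L(\Omega)$ together with the invariance of $C_{\Omega},B_{\Omega}$ gives $C_{\Omega_n}/B_{\Omega_n}\le L(\Omega)$, and letting $n\to\infty$ using Lemma \ref{ca} (whose hypothesis is exactly the second assumption) and $B_{\Omega_n}\to B_{\Omega_{\infty}}$ yields $C_{\Omega_{\infty}}/B_{\Omega_{\infty}}\le L(\Omega)$.

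Now I invoke the general comparison inequality $H(B_{\Omega})(z,v)\ge -4\bigl(C_{\Omega}(z,v)/B_{\Omega}(z,v)\bigr)^2$ (the inequality underlying \cite[Theorem 1.10]{dww}, sharp on $\mathbb{B}^2$ where $C/B\equiv L=1/\sqrt3$ and $H(B)\equiv -4/3$). On $\Omega_{\infty}$ it produces the chain
\[
-4\Bigl(\tfrac{C_{\Omega_{\infty}}}{B_{\Omega_{\infty}}}\Bigr)^2\le H(B_{\Omega_{\infty}})\le -4L(\Omega)^2\le -4\Bigl(\tfrac{C_{\Omega_{\infty}}}{B_{\Omega_{\infty}}}\Bigr)^2,
\]
which collapses to equalities, so that $H(B_{\Omega_{\infty}})\equiv -4L(\Omega)^2$ is a negative constant and $C_{\Omega_{\infty}}/B_{\Omega_{\infty}}\equiv L(\Omega)$. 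Thus the complete \K\ metric $B_{\Omega_{\infty}}$ has constant negative holomorphic sectional curvature, and by the Lu Qikeng-type rigidity for the Bergman metric, $\Omega_{\infty}$ is biholomorphic to $\mathbb{B}^2$. (Consistently, Remark \ref{model} already gives $C_{\Omega_{\infty}}=K_{\Omega_{\infty}}$ with $H(K_{\Omega_{\infty}})=-4$, whence $B_{\Omega_{\infty}}=L(\Omega)^{-1}K_{\Omega_{\infty}}$.) Hence $m=2$, and $\Omega$ is strongly pseudoconvex at $p$.

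I expect the main obstacle to be the rigorous passage $H(B_{\Omega_n})\to H(B_{\Omega_{\infty}})$: this rests on identifying the $C^{\infty}$ limit metric furnished by Proposition \ref{ka} with the genuine Bergman metric of $\Omega_{\infty}$, which requires the stability of the Bergman kernels and of their logarithmic complex Hessians along the scaling, in tandem with the uniform comparability $B_{\Omega_n}\sim K_{\Omega_n}$ and the uniform quasi-finite geometry of the rescaled domains. A secondary delicate point is the applicability of the constant holomorphic sectional curvature rigidity to the unbounded model $\Omega_{\infty}$, for which one leans on completeness of its Bergman metric together with the uniformization of complete simply connected \K\ manifolds of constant negative holomorphic sectional curvature.
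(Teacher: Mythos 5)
Your overall skeleton agrees with the paper's: rescale at $p$, pass the two hypotheses to the model domain $\Omega_{\infty}$, show $\Omega_{\infty}$ is biholomorphic to $\mathbb{B}^2$, and conclude that $p$ has type $2$. But the mechanism you use to collapse the inequalities contains a genuine gap. Your entire argument pivots on the pointwise inequality $H(B_{\Omega})(z,v)\geq -4\bigl(C_{\Omega}(z,v)/B_{\Omega}(z,v)\bigr)^2$, which you attribute to ``the inequality underlying \cite[Theorem 1.10]{dww}.'' No such pointwise inequality is invoked in the paper, and it cannot be taken as a black box: since $C_{\Omega}\leq B_{\Omega}$, it would imply the universal lower bound $H(B_{\Omega})\geq -4$ for every bounded domain, and this is false --- there are bounded pseudoconvex domains (Herbort's example, and Zalcman-type planar domains) on which the holomorphic sectional curvature of the Bergman metric is unbounded from below. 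Given how the present paper uses \cite[Theorem 1.10]{dww} (it is applied \emph{after} strong pseudoconvexity has been established), that theorem is a rigidity statement for strongly pseudoconvex domains, not a pointwise curvature inequality valid on arbitrary domains, let alone on the unbounded model $\Omega_{\infty}$ where you need it. The paper closes the chain differently, and with only classical tools: the transported curvature bound $H(B_{\infty})\leq -4L^2(\Omega)$ makes $L(\Omega)B_{\infty}$ a complete K\"ahler metric of holomorphic sectional curvature $\leq -4$, so the Ahlfors--Schwarz lemma gives $K_{\Omega_{\infty}}\geq L(\Omega)B_{\infty}$; combined with $C_{\Omega_{\infty}}\leq L(\Omega)B_{\infty}$ (Lemma \ref{ca}) and the identity $K_{\Omega_{\infty}}=C_{\Omega_{\infty}}$ --- which is where the second hypothesis does its real work, not merely transporting $C/B\leq L$ --- one gets $K_{\Omega_{\infty}}=C_{\Omega_{\infty}}=L(\Omega)B_{\infty}$, hence constant curvature $-4$ and the ball.

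Your second difficulty, which you flag yourself, also disappears in the paper's route. You need the $C^{\infty}$ limit of $B_{\Omega_n}$ to be the \emph{genuine} Bergman metric of $\Omega_{\infty}$ (both your pointwise inequality and the Lu-type rigidity are statements about Bergman metrics), and that identification requires a stability theorem for Bergman kernels under scaling that you do not supply. The paper never needs it: it cites \cite[Proposition 3.10]{str} only for convergence of $B_{\Omega_n}$ to \emph{some} complete K\"ahler metric $B_{\infty}$ on $\Omega_{\infty}$, and the Ahlfors--Schwarz argument above works for any such limit carrying the inherited curvature bound and the inherited inequality $C_{\Omega_{\infty}}\leq L(\Omega)B_{\infty}$. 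So to repair your proof you would either have to prove the pointwise curvature inequality on $\Omega_{\infty}$ and the Bergman-kernel stability (both substantial, the first likely false in the generality you state it), or switch to the Ahlfors--Schwarz comparison, at which point you recover the paper's argument.
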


\begin{proof}
Let $\left\{\Omega_n\right\}$ be the scaling sequence with respect to $p$, and $\Omega_{\infty}$ be the corresponding model domain. Suppose $B_{\Omega_n}$ is the Bergman metric on $\Omega_n$, then Proposition 3.10 in \cite{str} implies that there exists a complete \K\ metric $B_{\infty}$ on $\Omega_{\infty}$ such that $B_n$ converges to $B_{\infty}$ on $\Omega_{\infty}$ locally uniformly in $C^{\infty}$ topology after taking a subsequence. Note that $H(B_{\infty})\leq -4L^2(\Omega)$ holds in $\Omega_{\infty}$. Since $C_{\Omega}$ and $B_{\Omega}$ are biholomorphically invariant and $C_{\Omega}\leq L(\Omega)B_{\Omega}$, then 
\[
C_{\Omega_{\infty}}(z,v)\leq L(\Omega)B_{\infty}(z,v)
\]
holds for any $(z,v)\in\Omega_{\infty}\times\mathbb{C}^2$ from Lemma \ref{ca}. Ahlfors-Schwarz lemma implies that
\[
K_{\Omega_{\infty}}(z,v)\geq L(\Omega)B_{\infty}(z,v)
\]
for any $(z,v)\in\Omega_{\infty}\times\mathbb{C}^2$. Since $K_{\Omega_{\infty}}=C_{\Omega_{\infty}}$, then $K_{\Omega_{\infty}}=L(\Omega)B_{\infty}=C_{\Omega_{\infty}}$, and it implies that $\Omega_{\infty}$ is biholomorphic to the unit ball. Then we obtain that $\Omega$ is strongly pseudoconvex at $p$.
\end{proof}

\bibliography{ref}
\bibliographystyle{plain}{}
\end{document}